\newtheorem{theorem}{\textbf{Theorem}}[section]
\newtheorem{lemma}[theorem]{\textbf{Lemma}}
\newtheorem{corollary}[theorem]{\textbf{Corollary}}
\theoremstyle{definition}
\newtheorem{definition}[theorem]{\textbf{Definition}}
\theoremstyle{remark}
\newtheorem{remark}[theorem]{Remark}
\numberwithin{equation}{section}
\def\forall{\hbox{for all}~}
\def\implies{\Longrightarrow}
\def\v{\vskip 1em}
\def\Tilde{\widetilde}
\def\bega{\begin{array}}
\def\enda{\end{array}}
\def\begi{\begin{itemize}}
\def\endi{\end{itemize}}
\def\bel{\begin{equation}\label}
\def\eeq{\end{equation}}
\def\sqr#1#2{\vbox{\hrule height .#2pt
\hbox{\vrule width .#2pt height #1pt \kern #1pt
\vrule width .#2pt}\hrule height .#2pt }}
\title{\Large\textbf{A Stochastic Model of Optimal Debt Management and Bankruptcy}}
\author{\small Alberto Bressan$^{(*)}$, Antonio Marigonda$^{(**)}$, Khai T.~Nguyen$^{(***)}$, and Michele Palladino$^{(*)}$\\ \, \\
\small (*) Department of Mathematics, Penn State University\\
\small University Park, PA~16802, USA.\\
\small (**) Dipartimento di Informatica, Universit\`a di Verona, Italy\\
\small (***) Department of Mathematics, North Carolina State University, USA\\
\small Raleigh, NC~27695, USA.
\\ \, \\
\small e-mails:~ bressan@math.psu.edu,~antonio.marigonda@univr.it,\\
\small ~tnguye13@ncsu.edu,~mup26@psu.edu
}
\begin{document}

\maketitle

\begin{abstract} 
A problem of optimal debt management is modeled as a 
noncooperative game between a borrower and a pool of  lenders, 
in infinite time horizon with exponential discount. 
The  yearly income of the borrower is governed 
by a stochastic process.
When the debt-to-income ratio $x(t)$ reaches a given size $x^*$, 
bankruptcy instantly occurs. The interest rate charged by the
risk-neutral lenders is precisely 
determined in order to compensate for this possible loss 
of their investment.   

For a given bankruptcy  threshold $x^*$, 
existence and properties of optimal feedback 
strategies for the borrower are studied, in a stochastic 
framework  as well as in a limit deterministic setting.
The paper also analyzes how the expected total cost to the borrower
changes, depending on different values of $x^*$.
\end{abstract}

\section{Introduction}

We consider a problem of optimal debt management in infinite time horizon, 
modeled as a noncooperative game between a borrower and a pool of risk-neutral lenders. 
Since the debtor may go bankrupt, lenders charge a higher interest rate to offset the possible loss of part of their investment. 

In the models studied in \cite{BJ, BN}, the borrower has a fixed income, but large values of the debt determine a bankruptcy risk.
Namely, if at a given time $t$ the debt-to-income ratio $x(t)$ is too big, there is a positive probability that panic spreads among investors
and bankruptcy occurs within a short interval $[t, \, t+\varepsilon]$. This event is similar to a bank run.
Calling $T_B$ the random bankruptcy time, this means
\[\mathrm{Prob}\Big\{ T_B\in [t,\, t+\varepsilon]\,\Big|\,T_B>t\Big\}=\rho(x(t))\cdot\varepsilon+ o(\varepsilon).\]
Here the ``instantaneous bankruptcy risk'' $\rho(\cdot)$ is a given, nondecreasing function.

At all times $t$, the borrower must allocate a portion $u(t)\in [0,1]$ of his income to service the debt, i.e., 
paying back the principal together with the running interest. Our analysis will be mainly focused on the existence and properties 
of an optimal repayment strategy $u=u^*(x)$ in feedback form.

In the alternative model proposed by Nu\~{n}o and Thomas in \cite{NT}, the yearly income $Y(t)$ is modeled as a stochastic process:
\begin{equation}\label{eq:Yincome}dY(t)~=~\mu Y(t)\,dt+ \sigma Y(t)\,dW.\end{equation}

Here $\mu\geq 0$ is an exponential growth rate, while $W$ denotes Brownian motion on a filtered probability space.
Differently from \cite{BJ, BN}, in \cite{NT} it is the borrower himself that chooses when to declare bankruptcy.
This decision will be taken when the debt-to-income ratio reaches a certain threshold $x^*$, beyond which the burden of 
servicing the debt becomes worse than the cost of bankruptcy.

At the time $T_b$ when bankruptcy occurs, we assume that the borrower pays a fixed price $B$, while lenders recover a 
fraction $\theta(x(T_b))\in [0,1]$ of their outstanding capital.   Here $x\mapsto \theta(x)$ is a 
nondecreasing function of the debt size.   For example, the borrower may hold an amount $R_0$ of collateral 
(gold reserves, real estate$\ldots$) which will be proportionally divided among creditors if bankruptcy occurs.  In this case, 
when bankruptcy occurs each investor will receive a fraction 
\begin{equation}\label{eq:recover}\theta(x(T_b)) ~=~\max\left\{  \dfrac{R_0}{x(T_b)}\,,~1\right\}\end{equation}
of his outstanding capital. 

Aim of the present paper is to provide a detailed mathematical analysis of some models closely related to \cite{NT}. We stress that 
these problems are very different from a standard problem of optimal control. Indeed, the interest rate charged by lenders is not given a priori.
Rather, it is determined by the expected evolution of the debt at all future times. Hence it depends globally on the entire feedback control
$u(\cdot)$. A ``solution'' must be understood as a Nash equilibrium, where the strategy implemented by the borrower
represents the best reply to the strategy adopted by the lenders, and conversely.

Our main results can be summarized as follows.
\begin{itemize}
\item We first assume that value $x^*$ at which bankruptcy occurs is a priori given, and seek an optimal feedback 
control $u=u^*(x)$ which minimizes the expected cost to the borrower. 
For any value $\sigma\geq 0$ of the diffusion coefficient in \eqref{eq:Yincome}, we prove that the problem admits at least one  Nash equilibrium solution, in feedback form.  
In the deterministic case where $\sigma=0$, the solution can be constructed by concatenating solutions of a system of two ODEs, 
with terminal data given at $x=x^*$.
\item[]
\item We then study how the expected total cost
of servicing the debt together with the bankruptcy cost (exponentially discounted in time), are affected by different choices of~$x^*$.

Let $\theta(x^*)\in [0,1]$ be the salvage rate, i.e., the fraction of 
outstanding capital that will be payed back to lenders if bankruptcy occurs when the debt-to-income ratio is $x^*$. 
If
\begin{equation}\label{lbig}
\lim_{s\to +\infty}~\theta(s)\,s~=~+\infty,
\end{equation} then, 
letting $x^*\to +\infty$, 
the total expected cost to the borrower goes to zero. 
On the other hand, if
\begin{equation}\label{lsmall}
\lim_{s\to +\infty}~\theta(s)\,s~<~+\infty,
\end{equation}
then the total expected cost to the borrower remains uniformly positive as $x^*\to +\infty$.
\end{itemize}

We remark that the assumption \eqref{lsmall} is quite realistic. 
For example, if \eqref{eq:recover} holds, then $\theta(x^*)\,x^*=R_0$ for all
$x^*$ large enough.  We remark 
that \eqref{lsmall} rules out the possibility
of a \emph{Ponzi scheme}, where the old debt is serviced by initiating more and more new loans. Indeed, if \eqref{lsmall} holds, then 
such a strategy will cause the total debt to blow up to infinity in finite time.

The remainder of the paper is organized as follows.   In Section~2
we describe more carefully the model, deriving the equations
satisfied by the value function $V$ and the discounted bond price $p$.
In Sections 3 and 4 we construct equilibrium solutions in feedback
form, in the stochastic case ($\sigma>0$) and in the 
deterministic case ($\sigma=0$), respectively.
Finally, Sections 5 and 6 contain an analysis of how the expected cost to the borrower 
changes,  depending on the bankruptcy threshold $x^*$.

In the economics literature, some related models of debt and bankruptcy 
can be found in \cite{AG, AR, BJ, C, EG}.    A general introduction to Nash equilibria 
and differential games can be found in \cite{BO, B}.  For the 
basic theory of optimal control and viscosity solutions of Hamilton-Jacobi equations we refer to \cite{BCD, BPi}.

\section{A model with stochastic growth}

We consider a slight variant of the model in \cite{NT}. We denote by $X(t)$ the 
total debt of a borrower (a government, or a private company) 
at time $t$.
The annual income $Y(t)$ of the borrower is assumed to be a
random process, governed by the stochastic evolution equation \eqref{eq:Yincome}.

The debt is financed by issuing bonds. When an investor buys a bond of 
unit nominal value, he receives a continuous stream of payments with intensity
$(r+\lambda) e^{-\lambda t}$.  Here 
\begin{itemize}
\item $r$ is the interest rate payed on bonds, which we assume coincides with the discount rate, 
\item $\lambda$ is the  rate at which the borrower pays back the principal.
\end{itemize}
If no bankruptcy occurs, the payoff for an investor will thus be
\[\int_0^\infty e^{-r} (r+\lambda) e^{-\lambda t}\, dt=1.\]

In case of bankruptcy, a lender recovers only a fraction $\theta\in [0,1]$ of his outstanding capital. Here $\theta$ can depend on the total amount 
of debt at the time on bankruptcy. To offset this possible loss, the investor buys a bond with unit nominal value at a discounted price $p\in [0,1]$.
As in \cite{BN, NT}, at any time $t$ the value $p(t)$ is uniquely determined by the competition of a pool of risk-neutral lenders.

We call $U(t)$ the rate of payments that the borrower chooses to make to his creditors, at time $t$. If this amount is not enough to
cover the running interest and pay back part of the principal, new bonds are issued, at the discounted price $p(t)$.
The nominal value of the outstanding debt thus evolves according to
\begin{equation}\label{dX}
\dot X(t)=-\lambda X(t) + \dfrac{(\lambda + r) X(t) - U(t)}{p(t)}.
\end{equation}

The debt-to-income ratio is defined as $x= X/Y$. In view of \eqref{eq:Yincome} and (\ref{dX}), Ito's formula \cite{O, Shreve} yields the stochastic evolution equation
\begin{equation}\label{db}
dx(t)~=~\left[ \left(\dfrac{\lambda+r}{p(t)} -\lambda + \sigma^2 -\mu\right) x(t) - \dfrac{u(t)}{p(t)}\right]\, dt - \sigma \,x(t)\, dW.
\end{equation}
                                                                   
Here $u = U/Y$ is the portion of the total income allocated to pay 
for the debt.
Throughout the following we assume $r>\mu$.  We observe that, if $r<\mu$,
then the  borrower's income grows faster than the debt 
(even if no payment is ever made).
In this case, with probability one the debt-to-income ratio would approach 
zero as $t\to +\infty$.
\v
In this model, the borrower has two controls. At each time $t$ he can decide the portion $u(t)$ of the total income which he allocates to repay the debt. 
Moreover, he can decide at what time $T_b$ bankruptcy is declared. 
Throughout the following, we consider a control in feedback form, so that
\begin{equation}\label{ufbk}u~=~u^*(x)\qquad\qquad \hbox{for}\quad x\in [0,x^*],\end{equation}
while bankruptcy is declared as soon as $x(t)$ reaches the value $x^*$.
The bankruptcy time is thus the random variable
\begin{equation}\label{Tb}
T_b~\doteq~\inf\bigl\{ t>0\,;~~x(t)= x^*\bigr\}\,.
\end{equation}
At first, we  let $x^*$ be a given upper bound to the size of the debt.   
In a later section, we shall regard $x^*$ 
as an additional control parameter, chosen by the borrower in order to minimize his expected cost.

Given an initial size $x_0$ of the debt,
the total expected cost to the borrower, exponentially discounted in time, is thus computed as
\begin{equation}\label{scost}
J[x_0, u^*,x^*]~=~E\left[\int_0^{T_b} e^{-rt} L(u^*(x(t)))\, dt + e^{-r T_b} B\right]_{x(0)=x_0}.\end{equation}
Here $B$ is a large constant, accounting for the bankruptcy cost,
while $L(u)$ is the instantaneous cost for the borrower to implement the control $u$. 
In the following we shall assume 
\v
{\bf (A)
} {\it The cost function $L$ is twice continuously differentiable for $u\in [0,1[\,$
and satisfies
\begin{equation}\label{Lprop}L(0)=0,\qquad L'>0, \quad
 L''>0,\qquad\quad \lim_{u\to 1-} L(u)~=~+\infty.\end{equation}}

For example, one may take
\[L(u)=c\,\ln\dfrac{1}{1-u},\qquad \textrm{ or }\qquad L(u)=\dfrac{c u}{(1-u)^\alpha}\,,\]
for some $c,\alpha>0$.

Fix $x^*>0$.
For a given initial debt $x(0)=x_0\in [0, x^*]$, we define the corresponding value function as
\begin{equation}\label{V}
V(x_0)~=~\inf_{u^*(\cdot)}
~J[x_0, u^*, x^*]\,.\end{equation} 
Under the assumptions {\bf (A)
} we have
\begin{equation}\label{bd4}
V(0)~=~0\,,\qquad
\qquad V(x^*)~=~B\,.
\end{equation}
Denote by
\begin{equation}\label{H1}
H(x,\xi,p)~\doteq~\min_{\omega\in [0,1]}
~\left\{ L(\omega) - \dfrac{\xi}{p}\, \omega\right\}+\left(\dfrac{\lambda+r}{p} -\lambda + \sigma^2 -\mu
\right) x\, \xi\,
\end{equation}
the Hamiltonian associated to the dynamics (\ref{db}) and the cost function 
$L$ in (\ref{scost}). Notice that, as long as $p>0$, the function $H$
is differentiable with Lipschitz continuous derivatives w.r.t.~all arguments. 

By a standard  arguments,   
the value function $V$ provides a solution to the second order ODE
\begin{equation}\label{EqV}
r V(x)~=~H\bigl(x,V'(x),p(x)\bigr)+\dfrac{(\sigma x)^2}{2}V''(x)\,,
\end{equation}
with boundary conditions (\ref{bd4}).
As soon as the function $V$ is determined, the 
optimal feedback control is recovered by 
\[u^*(x)=\underset{\omega\in [0,1]}{\mathrm{argmin}}\left\{ L(\omega) - \dfrac{V'(x)}{p(x)}\, \omega\right\}.\]
By {\bf (A)
} this yields
\begin{equation}\label{u^*}
u^*(x)~=~\begin{cases}
0&\qquad\mathrm{if}\qquad\displaystyle\dfrac{V'(x)}{p(x)}~\leq~L'(0) \,,\\[4mm]
\displaystyle (L')^{-1}\left(\dfrac{V'(x)}{p(x)}\right)&\qquad \mathrm{if}\qquad \displaystyle\dfrac{V'(x)}{p(x)}~>~L'(0)\,.\end{cases}
\end{equation}
Assuming that lenders are risk-neutral, the discounted bond price $p$ is determined by
\begin{equation}\label{p}
p(x_0)~=~ E\Big[\int_0^{T_b}(r+\lambda)e^{-(r+\lambda)t}dt+
e^{-(r+\lambda)T_b}\theta(x^*)\Big]_{x(0)=x_0}\,,
\end{equation}
where $\theta$ denotes the salvage rate.  In other words, if bankruptcy
occurs when the debt-to-income ratio is $x^*$, then investors receive a fraction $\theta(x^*)\in [0,1]$ of the nominal value of their holding. Notice that 
the random variable $T_b$ in (\ref{Tb}) now 
depends on the initial state $x_0$, the threshold $x^*$, and on the feedback control 
$u^*(\cdot)$.

By the Feynman-Kac formula, $p(\cdot)$  satisfies the equation
\begin{equation}\label{pHJ}
(r+\lambda)(p(x)-1)~=~\left[ \left(\dfrac{\lambda+r}{p(x)} -\lambda + \sigma^2 -\mu
\right) x - \dfrac{u^*(x)}{p(x)}\right]\cdot p'(x)+ \dfrac{(\sigma x)^2}{2} p''(x),\end{equation}
with boundary values
\begin{equation}\label{pbv}p(0)~ =~ 1,\qquad\qquad p(x^*) ~=~\theta(x^*)\,.\end{equation}
Combining (\ref{EqV}) and (\ref{pHJ}), 
we are thus led to the system of second order ODEs
\begin{equation}\label{ode0}
\begin{cases}
rV(x)&= ~\displaystyle H\bigl(x,V'(x),p(x)\bigr) + \dfrac{(\sigma x)^2}{2}\cdot V''(x) \,,\\[6mm]
(r+\lambda)(p(x)-1)&=~\displaystyle H_{\xi}\bigl(x,V'(x),p(x)\bigr)\cdot p'(x)+\dfrac{(\sigma x)^2}{2}\cdot p''(x)\,,
\end{cases}
\end{equation}
with the boundary conditions
\begin{equation}\label{bdc}
\begin{cases}V(0)&=~0,\\ \\ V(x^*)&=~B,\end{cases}\qquad\qquad
\begin{cases}p(0)&=~1,\\ \\ p(x^*)&=~\theta(x^*).\end{cases}
\end{equation}

We close this section by collecting some useful properties of the Hamiltonian function.

\begin{lemma}\label{lemma:Hprop}
Let the assumptions {\bf (A)}
 hold. 
Then, for all $\xi\geq 0$ and $p\in [0,1]$,  the function $H$ in (\ref{H1})
satisfies
\begin{equation}\label{Hb1}
\left(\frac{(\lambda+r) x-1}{p}+(\sigma^2-\lambda-\mu)x\right)\xi ~\leq~H(x,\xi,p)~\leq~\left(\dfrac{\lambda+r}{p} -\lambda + \sigma^2 -\mu\right) x\xi,
\end{equation}
\begin{equation}\label{Hb2}
\frac{(\lambda+r) x-1}{p}+(\sigma^2-\lambda-\mu)x~\leq~H_\xi(x,\xi,p)~\leq~\left(\dfrac{\lambda+r}{p} -\lambda + \sigma^2 -\mu\right) x.
\end{equation}
Moreover, for every $x,p>0$ 
the map $\xi\mapsto H(x,\xi,p)$ is concave down and satisfies
\begin{align}
\label{H00}H(x,0,p)&~=~0,\\ 
\label{H0x}H_\xi(x,0,p)&~=~\left(\dfrac{\lambda+r}{p} -\lambda + \sigma^2 -\mu\right) x\,,\\
\label{limH}\lim_{\xi\to +\infty} H(x,\xi,p)&~=~
\begin{cases}
-\infty,&\textrm{ if }~~\dfrac{1}{p}>\left(\dfrac{\lambda+r}{p} -\lambda + \sigma^2 -\mu\right) x\,,\\[4mm]
+\infty,&\textrm{ if }~~\dfrac{1}{p}\leq\left(\dfrac{\lambda+r}{p} -\lambda + \sigma^2 -\mu\right) x\,.
\end{cases}
\end{align}
\end{lemma}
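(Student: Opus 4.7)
The unifying observation is that the Hamiltonian (\ref{H1}) splits as
\[
H(x,\xi,p)~=~M(\xi/p)~+~b(x,p)\,\xi,\qquad b(x,p)~:=~\left(\dfrac{\lambda+r}{p}-\lambda+\sigma^2-\mu\right)x,
\]
where $M(z):=\min_{\omega\in[0,1]}\{L(\omega)-z\omega\}$ is (minus) the partial Legendre transform of $L$. All five assertions of the lemma reduce to elementary facts about this one-variable function $M$ on $[0,+\infty)$, which I would establish first and then feed back into the expressions for $H$ and $H_\xi$.

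The properties of $M$ I need are: (i) $M(0)=0$, attained at $\omega=0$; (ii) $-z\leq M(z)\leq 0$, the upper bound from $\omega=0$ and the lower bound from $L\geq 0$ together with $\omega\leq 1$; (iii) $M$ is concave on $[0,+\infty)$, being the infimum of a family of affine functions of $z$; (iv) under assumption~\textbf{(A)}, the unique minimizer is $\omega^*(z)=0$ for $z\leq L'(0)$ and $\omega^*(z)=(L')^{-1}(z)\in(0,1)$ for $z>L'(0)$, so by the envelope theorem $M$ is $C^1$ with $M'(z)=-\omega^*(z)\in[-1,0]$ and in particular $M'(0)=0$; (v) since $L'>0$ is monotone increasing and $L(\omega)\to+\infty$ as $\omega\to 1^-$ forces $L'(\omega)\to+\infty$, one has $\omega^*(z)\to 1$ and $L(\omega^*(z))\to+\infty$ as $z\to+\infty$, whence $M(z)+z=L(\omega^*(z))+z(1-\omega^*(z))\to+\infty$.

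From these, the conclusions are routine. The bounds (\ref{Hb1}) follow by inserting (ii) into $H=M(\xi/p)+b\xi$, and (\ref{Hb2}) by inserting (iv) into $H_\xi=M'(\xi/p)/p+b$. Concavity of $\xi\mapsto H(x,\xi,p)$ comes from (iii), since precomposition with the affine map $\xi\mapsto\xi/p$ preserves concavity and we then add a linear term. The identities (\ref{H00}) and (\ref{H0x}) are exactly (i) and $M'(0)=0$. For the dichotomy in (\ref{limH}), set $a:=1/p$. If $a>b(x,p)$, pick any $\omega\in(0,1)$ with $a\omega>b$: then $M(\xi/p)\leq L(\omega)-a\omega\xi$ yields $H\leq L(\omega)+(b-a\omega)\xi\to-\infty$. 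If $a<b$, the lower bound in (\ref{Hb1}) already gives $H\geq(b-a)\xi\to+\infty$; the borderline case $a=b$ follows from $H=M(a\xi)+a\xi\to+\infty$ by (v).

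The only step requiring a moment's thought is (v): promoting the qualitative divergence $L(1^-)=+\infty$ to the divergence $L'(\omega)\to+\infty$, and hence to $\omega^*(z)\to 1$. The short argument is that a bounded $L'$ on $[0,1)$ would force $L$ itself to be bounded there (integrate), contradicting the blow-up hypothesis; monotonicity of $L'$ then gives the limit. Everything else is direct bookkeeping once $M$ has been isolated.
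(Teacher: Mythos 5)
Your proposal is correct and takes essentially the same route as the paper: the paper likewise gets the upper bounds from the test value $\omega=0$, the lower bounds from $L\geq 0$ together with $\omega\leq 1$ (equivalently, your $-z\leq M(z)\leq 0$ and $M'\in[-1,0]$ via the minimizer $(L')^{-1}$), concavity from the infimum-of-affine-functions structure, and the dichotomy in (\ref{limH}) from a fixed test $\omega_0$ in the first case and from $\omega^*(\xi/p)\to 1$ with $L(\omega^*)\to+\infty$ in the second. Your explicit isolation of the one-variable function $M$ and the remark that boundedness of $L'$ would contradict the blow-up of $L$ are tidy packaging of steps the paper carries out directly on $H$, but the underlying argument is the same.
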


\begin{proof}
\begin{enumerate}[leftmargin=*,label=\textbf{\arabic*.}] 
\item[]
\item Since $H(x,\cdot, p)$ is defined as the infimum of a family of affine functions, it is concave down.  
We observe that
(\ref{H1}) implies
\begin{equation}\label{H5}
H(x,\xi,p)~=~\left(\dfrac{\lambda+r}{p} -\lambda + \sigma^2 -\mu
\right) x\xi\qquad\qquad \hbox{if}\quad 0\leq\xi\leq p L'(0).\end{equation}
This yields the identities (\ref{H00})-(\ref{H0x}).
\item[]
\item Taking $\omega=0$ in (\ref{H1}) we obtain the upper bound in (\ref{Hb1}).
By the concavity property, the map $\xi\mapsto H_\xi(x,\xi,p)$ is non-increasing.
Hence (\ref{H0x}) yields the upper bound in (\ref{Hb2}).
\item[]
\item Since $L(w)\geq 0$ for all $w\in [0,1]$, we have
\[
H(x,\xi,p)~\geq~\min_{w\in [0,1]}\left\{-\dfrac{\xi}{p}w\right\}+\left(\dfrac{\lambda+r}{p} -\lambda + \sigma^2 -\mu
\right) x\, \xi\,
\]
and obtain the lower bound in (\ref{Hb1}). On the other hand, using the optimality condition, one computes from (\ref{H1}) that 
\begin{equation}\label{HXX}
H_{\xi}(x,\xi,p)~=~\frac{(\lambda+r) x-u^*(\xi,p)}{p}+(\sigma^2-\lambda-\mu)x
\end{equation}
where 
\[
u^*(\xi,p)~=~\underset{\omega\in [0,1]}{\mathrm{argmin}}~\left\{ L(\omega) - \dfrac{\xi}{p}\, \omega\right\}~=~(L')^{-1}\left(\dfrac{\xi}{p}\right)~<~1\,.
\]
Observe that, as $\xi\to +\infty$, one has $u^*(\xi,p)\to 1$ in (\ref{HXX}).
The non-increasing property of the map $\xi\to H_{\xi}(x,\xi,p)$ yields  the lower bound in (\ref{Hb2}).
\item[]
\item To prove (\ref{limH}) we observe that, in the first case, 
there exists $\omega_0<1$ such that
\[\dfrac{\omega_0}{p}~>~\left(\dfrac{\lambda+r}{p} -\lambda + \sigma^2 -\mu \right) x.\]
Hence, letting $\xi\to +\infty$ we obtain
\[\lim_{\xi\to +\infty} H(x,\xi,p)~\leq~
~\lim_{\xi\to +\infty}~\left[ L(\omega_0) - \dfrac{\omega_0}{p} \xi+\left(\dfrac{\lambda+r}{p} -\lambda + \sigma^2 -\mu
\right) x\, \xi\right]~=~-\infty\,.\]
To handle the second case, we observe that, for $\xi>0$ large,
the minimum in (\ref{H1}) is attained at the unique point $\omega(\xi)$
where 
$L'(\omega(\xi))~=~{\xi/p}$. Hence $\displaystyle\lim_{\xi\to +\infty} \omega(\xi)= 1$ and
\begin{align*}
\lim_{\xi\to +\infty}H(x,\xi,p)&\displaystyle=~\lim_{\xi\to +\infty} \left[ L(\omega(\xi)) - \dfrac{\omega(\xi)}{p} \xi+\left(\dfrac{\lambda+r}{p} -\lambda + \sigma^2 -\mu
\right) x\, \xi\right]\\[4mm]
&\displaystyle\geq~\lim_{\xi\to +\infty} L(\omega(\xi))~=~+\infty.
\end{align*}
\end{enumerate}
\end{proof}

\section{Existence of solutions} 

Let $x^*>0$ be given.
If a solution $(V,p)$ to the boundary value problem (\ref{ode0})-(\ref{bdc}) is found, 
then 
the feedback control $u=u^*(x)$ defined at (\ref{u^*}) and the function $p=p(x)$
provide an equilibrium solution to the debt management problem.
In other words, for every initial value $x_0$ of the debt, the following holds.
\begin{itemize}
\item[(i)]  Consider the stochastic dynamics (\ref{db}),
with $u(t) = u^*(x(t))$ and  $p(t) = p(x(t))$.   
Then for every $x_0\in [0,x^*]$ the identity (\ref{p})
holds, where $T_b$ is the random bankruptcy time defined at (\ref{Tb}) .  

\item[(ii)] Given the discounted price $p=p(x)$, for every initial data 
$x(0)=x_0\in [0,x^*]$ the feedback 
control $u=u^*(x)$ is optimal for the stochastic optimization problem
\begin{equation}\label{smin}
\hbox{minimize:}~~E\left[\int_0^{T_b} e^{-rt} L(u^*(x(t)))\, dt + e^{-r T_b} B\right].\end{equation}
with stochastic dynamics (\ref{db}), where $p(t) = p(x(t))$.
\end{itemize}

To construct a solution to the system
(\ref{ode0})-(\ref{bdc}),
we consider the auxiliary parabolic system
\begin{equation}\label{par1}
\begin{cases}
V_t (t,x)&= ~\displaystyle -rV(t,x)+ H\bigl(x,V_x(t,x),p(t,x)\bigr) + \dfrac{(\sigma x)^2}{2}\cdot V_{xx}(t,x) \,,\\[4mm]
p_t(t,x)&=~(r+\lambda)(1-p(t,x))+ \displaystyle H_{\xi}\bigl(x,V_x(t,x),p(t,x)\bigr)\cdot p_x(t,x)+\dfrac{(\sigma x)^2}{2}\cdot p_{xx}(t,x)\,,
\end{cases}
\end{equation}
with boundary conditions (\ref{bdc}). 
Following \cite{A}, the main idea is to construct a compact, convex set of functions 
$(V,p): [0, x^*]\mapsto [0,B]\times [\theta(x^*),1]$
which is positively invariant for the parabolic evolution problem.
A topological technique will then yield the existence of a steady state,
i.e.~a solution to (\ref{ode0})-(\ref{bdc}). 

\begin{theorem}\label{thm:existence}
In addition to {\bf (A)},  assume that $\sigma>0$ and 
$ \theta(x^*)>0$. 
Then the system of second order ODEs (\ref{ode0}) with boundary conditions 
(\ref{bdc}) admits a $\mathcal{C}^2$ solution $(\overline{V},\bar{p})$, 
such that $\overline{V}:[0,x^*]\to [0,B]$ is increasing and 
$\bar{p}:[0,x^*]\to [\theta(x^*),1]$ is decreasing. 
\end{theorem}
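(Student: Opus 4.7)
The plan is to follow the strategy sketched just before the theorem: consider the auxiliary parabolic system (\ref{par1}) with boundary data (\ref{bdc}), construct a compact convex subset of admissible profiles that is positively invariant under the flow, and extract a steady state by a topological fixed-point argument. Any steady state of (\ref{par1}) is automatically a $C^2$ solution of (\ref{ode0})--(\ref{bdc}).

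First I would fix the candidate invariant set
\[
\mathcal{K}~\doteq~\Big\{(V,p)\in C^{1,\alpha}([0,x^*])^2 \,:\, (\ref{bdc})\text{ holds},~V\text{ nondecreasing, valued in }[0,B],~p\text{ nonincreasing, valued in }[\theta(x^*),1]\Big\},
\]
which is convex and, once uniform $C^{1,\alpha}$ bounds on $V'$ and $p'$ are imposed, relatively compact in $C^1([0,x^*])^2$. The hypothesis $\theta(x^*)>0$ is essential here, since $H$ and $H_\xi$ carry the factor $1/p$ in (\ref{H1}): keeping $p$ bounded away from zero keeps the coefficients of (\ref{par1}) uniformly Lipschitz.

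Next, for any smooth initial datum $(V_0,p_0)\in\mathcal{K}$ compatible with (\ref{bdc}), standard quasilinear parabolic theory yields a local classical solution $(V,p)$ of (\ref{par1}). On the interior $(0,x^*)$ the system is uniformly parabolic because $\sigma>0$; the degeneracy at $x=0$ is harmless because Dirichlet data are prescribed there. To show that the rectangle $[0,B]\times[\theta(x^*),1]$ is preserved I would argue face by face with the parabolic maximum principle: at a first interior touch $V=0$ one has $V_x=0$, $V_{xx}\geq 0$ and $H(x,0,p)=0$ by (\ref{H00}), so $V_t\geq 0$; at $V=B$ one gets $V_t\leq -rB+H(x,0,p)=-rB<0$; at $p=1$ one gets $p_t\leq(r+\lambda)(1-p)=0$; at $p=\theta(x^*)$ one gets $p_t\geq(r+\lambda)(1-\theta(x^*))>0$. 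Preserving the monotonicity constraints is delicate: I would differentiate (\ref{par1}) in $x$, set $W\doteq V_x$ and $Q\doteq -p_x$, and use the bounds (\ref{Hb1})--(\ref{Hb2}) from Lemma~\ref{lemma:Hprop} to control the zero-order coefficients of the linearized equations so that $W\geq 0$ and $Q\geq 0$ are propagated from the initial datum.

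Once positive invariance is established I would conclude as follows. Writing $S_t:\mathcal{K}\to\mathcal{K}$ for the solution semigroup of (\ref{par1}), parabolic regularization makes each $S_T$ (for $T>0$) continuous and compact, so Schauder's theorem produces a fixed point $(V_T,p_T)\in\mathcal{K}$. Applying this to $T=1/n$, extracting a $C^1$-convergent subsequence, and using the commutativity $S_tS_s=S_{t+s}$ together with the continuity of the flow, the limit $(\overline V,\bar p)$ is a common fixed point of every $S_t$, hence a steady state of (\ref{par1}); elliptic regularity lifts it to $C^2$, and strict monotonicity follows from the strong maximum principle applied to $\overline V'$ and $\bar p'$. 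I expect the main obstacle to be the propagation of monotonicity: when (\ref{par1}) is differentiated in $x$, the equation for $W=V_x$ picks up terms in $H_x$ and $H_p$ whose sign is not immediate, and the inequalities for $W,Q$ at the endpoints $x=0,x^*$ must be checked by hand from (\ref{bdc}). A secondary technical nuisance is the degeneracy of the diffusion at $x=0$, where one has to verify that the Dirichlet conditions $V(0)=0$, $p(0)=1$ are genuinely attained and that the strong maximum principle still applies at interior touching points.
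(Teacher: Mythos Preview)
Your overall architecture matches the paper's --- parabolic auxiliary system, positively invariant convex set, Schauder fixed points for the time-$T$ maps, steady state in the limit $T\to 0$ --- but two points are treated too lightly and constitute genuine gaps.

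\textbf{Degeneracy at $x=0$.} You call this a ``secondary technical nuisance'' and assert that it is ``harmless because Dirichlet data are prescribed there.'' The paper does not take this view: it first adds $\varepsilon V_{xx}$ and $\varepsilon p_{xx}$ to (\ref{par1}) so that the system becomes \emph{uniformly} parabolic on all of $[0,x^*]$, applies Amann's invariant-region/steady-state theorem to obtain $(V^\varepsilon,p^\varepsilon)$, and then devotes three full steps to $\varepsilon$-independent a priori estimates before letting $\varepsilon\to 0$. In particular, to guarantee that the limit still satisfies $p(0)=1$ the paper builds an explicit lower barrier $p^-(x)=1-cx^\gamma$ with a carefully chosen exponent $\gamma$ depending on $r,\lambda,\mu,\sigma,\theta(x^*)$. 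Without the $\varepsilon$-regularization, neither the existence theory for (\ref{par1}) nor the compactness of $S_T$ is standard near $x=0$, and there is no mechanism preventing boundary-layer loss of the condition $p(0)=1$. Your proposal does not supply any of this.

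\textbf{Propagation of monotonicity.} You correctly anticipate that differentiating (\ref{par1}) in $x$ produces terms $H_x$ and $H_p$ of uncertain sign in the equation for $W=V_x$, and you propose to control them via the bounds (\ref{Hb1})--(\ref{Hb2}). Those bounds alone are not enough: they give growth estimates on $H$ and $H_\xi$, not sign information on $H_x,H_p$. The paper's device is the identity (\ref{H00}): since $H(x,0,p)=0$ for all $x,p$, one has $H_x(x,0,p)=H_p(x,0,p)=0$, and therefore $V_x\equiv 0$ is an exact subsolution of the differentiated equation (\ref{Vxt}) regardless of the sign of $p_x$. Similarly, the equation (\ref{pxt}) for $p_x$ has $p_x$ as a common factor in every zero-order term, so $p_x\equiv 0$ is a supersolution. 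This is the missing idea; (\ref{Hb1})--(\ref{Hb2}) are used elsewhere (for the $\varepsilon\to 0$ estimates), not here.
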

\begin{proof}
\begin{enumerate}[leftmargin=*,label=\textbf{\arabic*.}] 
\item[]
\item For any $\varepsilon>0$, consider the parabolic system
\begin{equation}\label{parV}
V_t = ~\displaystyle -rV+ H(x,V_x,p) + \left(\varepsilon+\dfrac{(\sigma x)^2}{2}\right)V_{xx}\,,\qquad \qquad 
\begin{cases}V(0)&=~0,\cr\cr V(x^*)&=~B,\end{cases}
\end{equation}
\begin{equation}\label{parp}
p_t~=~(r+\lambda)(1-p)+ \displaystyle H_{\xi}(x,V_x,p) p_x
+ \left(\varepsilon+\dfrac{(\sigma x)^2}{2}\right) p_{xx}\,,\qquad\qquad\begin{cases}p(0)&=~1,\\ \\
 p(x^*)&=~\theta(x^*)\,.\end{cases}\end{equation}
obtained from (\ref{par1}) by adding the terms $\varepsilon V_{xx}$, 
$\varepsilon p_{xx}$ on the right hand sides. For any $\varepsilon>0$,
this renders the system uniformly parabolic, also in a neighborhood of $x=0$.
\item[]
\item Adopting a semigroup notation, let
$t\mapsto (V(t), p(t))= S_t(V_0, p_0)$ be the solution of the system
(\ref{parV})-(\ref{parp}), with initial data
\begin{equation}\label{idvp}
V(0,x)=V_0(x),\qquad p(0,x)=p_0(x).\end{equation}
Consider the
closed, convex set of functions 
\begin{equation}\label{Ddef}
\mathcal{D}~=~\Big\{ (V,p):[0,x^*]\mapsto [0,B]\times [\theta(x^*),1]~;~~~V,p\in \mathcal{C}^2,\,V_x\geq 0,\,p_x\leq 0,\,\textrm{ and (\ref{bdc}) holds}\Big\}\,.
\end{equation}
We claim that the above domain is positively invariant under the semigroup $S$, namely
\begin{equation}\label{invar}
S_t(\mathcal{D})~\subseteq~\mathcal{D} \qquad \qquad\forall t\geq 0\,.
\end{equation}
Indeed, consider the constant functions
\[\begin{cases}V^+(t,x)&=~B,\\ \\ V^-(t,x)&=~0,\end{cases}\qquad\qquad\begin{cases}p^+(t,x)&=~1,\\ \\  p^-(t,x)&=~\theta(x^*)\,.\end{cases}\] 
Recalling (\ref{H00}), one easily checks that
$V^+$ is a supersolution and $V^-$ is a subsolution of the scalar
parabolic problem (\ref{parV}).  Indeed
$$-rV^++ H(x,V^+_x,p) + \left(\varepsilon+\dfrac{(\sigma x)^2}{2}\right)
V^+_{xx}~\leq~0,\quad\qquad V^+(t,0)\geq 0,\qquad V^+(t,x^*)\geq B.$$ 
$$-rV^-+ H(x,V^-_x,p) + \left(\varepsilon+\dfrac{(\sigma x)^2}{2}\right)
V^-_{xx}~\geq~0,\quad\qquad V^-(t,0)\leq 0,\qquad V^-(t,x^*)\leq B.$$ Similarly, $p^+$ is a supersolution and $p^-$ is a subsolution of the scalar parabolic problem (\ref{parp}). 

This proves that, if the initial data $V_0,p_0$ in (\ref{idvp}) take values in the box
$[0, B]\times [\theta(x^*),\, 1]$, then for every $t\geq 0$ the solution of the system
(\ref{parV})-(\ref{parp}) will satisfy
\begin{equation}\label{comp2}
0~\leq~V(t,x)~\leq~B,\qquad\qquad \theta(x^*)~\leq~p(t,x)~\leq~1,\end{equation} 
for all $x\in [0,x^*]$. In turn, this implies
\begin{equation}\label{b++}\begin{cases}
V_x(t,0)&\geq~0,\\ \\ V_x(t,x^*)&\geq ~0,\end{cases}\qquad\qquad \begin{cases}p_x(t,0)&\leq~0,\\ \\
 p_x(t,x^*)&\leq ~0\,.\end{cases}
\end{equation}
\item[]
\item Next, we prove that the monotonicity properties of $V(t,\cdot)$ and 
$p(t,\cdot)$ are preserved in time.  
Differentiating w.r.t.~$x$
one obtains
\begin{equation}\label{Vxt} V_{xt}~=~- r V_x + H_x + H_\xi V_{xx} + H_pp_x +
\sigma^2 x V_{xx} +\left(\varepsilon+\dfrac{(\sigma x)^2}{2}\right) 
V_{xxx}\,,\end{equation}
\begin{equation}\label{pxt}p_{xt}~=~-(r+\lambda) p_x + \left(\dfrac{d}{dx} H_\xi(x,V_x,p)\right) p_x
+ H_\xi p_{xx} + 
\sigma^2 x p_{xx} +\left(\varepsilon+\dfrac{(\sigma x)^2}{2}\right) 
p_{xxx}\,.\end{equation}
By (\ref{H00}),  for every $x,p$ one has $H_x(x,0,p) = H_p(x,0,p)=0$.
Hence $V_x\equiv 0$ is a subsolution of (\ref{Vxt}) and 
$p_x\equiv 0$ is a supersolution of (\ref{pxt}).
In view of (\ref{b++}), we obtain
$$p_x(t,x)~\leq~0~\leq V_x(t,x)\qquad
\qquad\forall ~~t\geq 0,~~x\in [0,x^*].$$
This concludes the proof that
the set $\mathcal{D}$ in (\ref{Ddef})
is positively invariant for the system (\ref{parV})-(\ref{parp}).
\item[]
\item Thanks to the bounds (\ref{Hb1})-(\ref{Hb2}), we
can now apply Theorem~3 in \cite{A} and obtain the existence of a steady state 
$( V^\varepsilon, p^\varepsilon)\in \mathcal{D}$ for the system 
(\ref{parV})-(\ref{parp}).  

We recall the main argument in \cite{A}. For every $T>0$ the map
$(V_0, p_0)\mapsto S_T(V_0, p_0)$ is a compact transformation
of the closed convex domain $\mathcal{D}$ into itself. By Schauder's 
theorem it has a fixed point.  This yields a periodic solution
of the parabolic system (\ref{parV})-(\ref{parp}), with period $T$.
Letting $T\to 0$, one obtains a steady state.
\item[]
\item It now remains to derive a priori estimates on this stationary
solution, which will allow to take the limit as $\varepsilon\to 0$.
Consider any solution to
\begin{equation}\label{VPE}\begin{cases}\displaystyle
-rV+ H(x,V',p) + \left(\varepsilon+\dfrac{(\sigma x)^2}{2}\right)
V''&=~0\,,\\[4mm]
(r+\lambda)(1-p)+ \displaystyle H_{\xi}(x,V',p) p'
+ \left(\varepsilon+\dfrac{(\sigma x)^2}{2}\right) p''&=~0\,,\end{cases}\end{equation}
with $V$ increasing, $p$ decreasing, and satisfying the boundary conditions (\ref{bdc}).

By the properties of $H$ derived in Lemma \ref{lemma:Hprop}, we can find $\delta>0$ small enough 
and $\xi_0>0$ such that the following implication holds:
\[x\in [0,\delta],~~~p\in [\theta(x^*),1],~~~ \xi\geq\xi_0  \qquad\implies\qquad
H(x,\xi,p)\leq 0\,.\]
As a consequence, if $V'(x)>\xi_0$ for some $x\in [0,\delta]$, then 
the first equation in (\ref{VPE}) implies $V''(x)\geq 0$.
We conclude that either $V'(x)\leq \xi_0$ for all $x\in [0,x^*]$, or else
$V'$ attains its maximum on the subinterval $[\delta, x^*]$.

By the intermediate value theorem, there exists a point $\hat x\in [\delta, x^*]$ 
where 
\begin{equation}\label{Vxh}V'(\hat x)~=~\dfrac{V(x^*)- V(\delta)}{x^*-\delta}~\leq~ \dfrac{B}{x^*-\delta}\,.\end{equation}
By (\ref{Hb1}), the derivative $V'$ satisfies a differential inequality of the form
\begin{equation}\label{Vtt}|V''|~\leq~c_1 |V'|+c_2\,,\qquad\qquad x\in [\delta, x^*]\,.\end{equation}
for suitable constants $c_1, c_2$.
By Gronwall's lemma, from the differential inequality (\ref{Vtt}) and the 
estimate (\ref{Vxh}) 
one obtains a uniform bound on $V'(x)$, for all $x\in [\delta, \hat x]\cup[\hat x, x^*]$.
\item[]
\item Similar arguments apply to $p'$.  By (\ref{Hb2}), the term $H_\xi(x,V',p)$ in 
(\ref{VPE}) is uniformly bounded. 
For every $\delta>0$, by (\ref{VPE}) shows that $p'$ satisfies a linear ODE
whose coefficients remain bounded on $[\delta, x^*]$, uniformly 
w.r.t.~$\varepsilon$. This yields the bound
$$|p'(x)|~\leq~C_\delta\qquad\qquad \forall x\in [\delta, x^*]$$
for some constant $C_\delta$, uniformly valid as $\varepsilon\to 0$.

To make sure that,
as $\varepsilon\to 0$, the limit satisfies the boundary value $p(0)=1$.
one needs to provide a lower bound on $p$ also in a neighborhood of $x=0$, independent of $\varepsilon$. Introduce the constant
$$\gamma~\doteq~\min\left\{ 1\,, ~(r+\lambda)
\left(\dfrac{\lambda+r}{ \theta(x^*)} -\lambda + \sigma^2 -\mu
\right)^{-1}\right\}.$$
Then define
$$p^-(x)~\doteq~1- c x^\gamma,$$
choosing $c>0$ so that $p^-(x^*)=\theta(x^*)$.
We claim that the convex function $p^-$
is a lower solution of the second equation in (\ref{VPE}). Indeed, by 
(\ref{VPE}), one has
$$(r+\lambda) c x^\gamma- H_\xi(x, V', p)\,c \gamma x^{\gamma-1}
~\geq~\left[(r+\gamma) - \left(\dfrac{\lambda+r}{ \theta(x^*)} -\lambda + \sigma^2 -\mu
\right)\gamma\right] cx^\gamma~\geq~0.$$
\item[]
\item Letting $\varepsilon\to 0$, we now consider a sequence 
$(V^{\varepsilon}, p^{\varepsilon})$ of solutions to (\ref{VPE}) with 
boundary conditions (\ref{bdc}).  Thanks to the previous estimates, 
the functions $V^{\varepsilon}$ are uniformly Lipschitz continuous on $[0, x^*]$, 
while the functions $p^\varepsilon$ are Lipschitz continuous
on any subinterval $[\delta,x^*]$ and satisfy 
$$p^-(x)~\leq ~p^\varepsilon(x)~\leq ~1\qquad\qquad\forall~x\in [0, x^*],~~\varepsilon >0.$$
By choosing a suitable subsequence, we achieve the uniform convergence
$(V^{\varepsilon}, p^{\varepsilon})\to (V,p)$, where $V,p$ are twice continuously differentiable
on the open interval $\,]0, x^*[$, and satisfy the boundary conditions (\ref{bdc}).
\end{enumerate}
\end{proof}

\section{The deterministic case}

If $\sigma=0$, then the stochastic equation (\ref{db}) 
reduces to the deterministic control system 
\begin{equation}\label{detode}
\dot x~=~\left(\dfrac{\lambda+r}{p} -\lambda -\mu
\right) x - \dfrac{u}{p} \,.\end{equation}

We then consider the deterministic Debt Management Problem.
\begin{itemize}
\item[{\bf (DMP)}]  {\it Given an initial value $x(0)=x_0\in [0, x^*]$ of the debt, 
minimize
\begin{equation}\label{cf}
\int_0^{T_b} e^{-rt} L(u(t))\, dt + e^{-rT_b} B\,,\end{equation}
subject to the dynamics
(\ref{detode}),
where the bankruptcy time $T_b$ is defined as in (\ref{Tb}),
while 
\begin{equation}\label{p1}
p(t)~=~\int^{T_b}_t(r+\lambda)e^{-(r+\lambda)s}ds+e^{(-r+\lambda)(T_b-t)}\cdot\theta(x^*)~=~1-(1-\theta(x^*))\, e^{-(r+\lambda)(T_b-t)}\,.
\end{equation}
}
\end{itemize}

Since in this case the optimal feedback control $u^*$  and the corresponding functions
$V,p$ may not be smooth, a concept of equilibrium solution should be 
more carefully defined.

\begin{definition}[Equilibrium solution in feedback form]\label{def:eqsol} 
 A couple of piecewise Lipschitz continuous  functions $u=u^*(x)$ and 
$p=p^*(x)$ provide an  equilibrium solution to the debt management problem (DMP), with 
continuous value function $V^*$, if
\begin{itemize}
\item[(i)] For every $x_0\in [0,x^*]$, ~
   $V^*$ is the minimum cost for the optimal control problem 
\begin{equation}\label{cost1}
\hbox{minimize:}~~\int_0^{T_b} e^{-rt} L(u(x(t)))\, dt + e^{-r T_b} B,
\end{equation}
subject to 
\begin{equation}\label{cont1}
\dot{x}(t)~=~\left(\dfrac{\lambda+r}{p^*(x(t))} -\lambda -\mu\right) x(t) - \dfrac{u(t)}{p^*(x(t))}\,,\qquad\qquad x(0)~=~x_0\,.
\end{equation}
Moreover, every Carath\'eodory solution of (\ref{cont1}) with $u(t)= u^*(x(t))$ is optimal.

\item[(ii)] 

For every $x_0\in [0,x^*]$, 
there exists at least one solution $t\mapsto x(t)$ of 
the Cauchy problem
\begin{equation}\label{fode}
\dot{x}~=~\left(\dfrac{\lambda+r}{p^*(x)} -\lambda -\mu\right) x - \dfrac{u^*(x)}{p^*(x)}\,,\qquad\qquad x(0)~=~x_0,
\end{equation}
such that
\begin{equation}\label{pB=}p^*(x_0)~=~\int^{T_b}_{0}(r+\lambda)e^{-(r+\lambda)t}dt+
e^{(-r+\lambda)T_b}\,\theta(x^*)~=~1-(1-\theta(x^*))\cdot e^{-(r+\lambda)T_b}\,,\end{equation}
with $T_b$ as in (\ref{Tb}).
\end{itemize}
\end{definition}


In the deterministic case, (\ref{ode0}) takes the form
\begin{equation}\label{ode1}
\begin{cases}
rV(x)&= ~H\bigl(x,V'(x),p(x)\bigr) \,,\\[4mm]
(r+\lambda)(p(x)-1)&=~H_\xi\bigl(x,V'(x),p(x)\bigr)\,p'(x)\,,\end{cases}\end{equation}
with Hamiltonian function (see Fig.~\ref{f:g53})
\begin{equation}\label{H2}
H(x,\xi,p)~=~\min_{\omega\in [0,1]}~\left\{ L(\omega) - \frac{\xi}{p} \omega \right\}+\left(\dfrac{\lambda+r}{p} -(\lambda +\mu)\,
\right) x\, \xi\,.
\end{equation}
We consider  solutions to (\ref{ode1}) with the boundary condition
\begin{equation}\label{bdc1}
\begin{cases}V(0)&=~0,\\ \\ V(x^*)&=~B,\end{cases}\qquad\qquad
\begin{cases}p(0)&=~1,\\ \\ p(x^*)&=~\theta(x^*)\,.\end{cases}
\end{equation}

Introduce the function
$$
H^{\max}(x,p)~\doteq~\sup_{\xi\geq 0}~H(x,\xi,p)~=~L\Big((\lambda+r)x-(\lambda+\mu)px\Big),
$$
with the understanding that 
\begin{equation}\label{Hmaxi}
H^{\max}(x,p)~=~+\infty\qquad\hbox{if}\qquad 
(\lambda+r)x-(\lambda+\mu)px~\geq~1\,.\end{equation}
If $(\lambda+r)x-(\lambda+\mu)px~<~1$, recalling (\ref{detode})  we define
\begin{equation}\label{usharp}u^\sharp(x,p)~=~(\lambda+r)x-(\lambda+\mu)px\,.\end{equation}
Notice that $u^\sharp$ is the control that keeps the debt $x$ constant in time. 
This value $u^\sharp$ achieves the minimum in (\ref{H2}) when
$$L'\Big((\lambda+r)x-(\lambda+\mu)px\Big) ~=~\frac{\xi}{p}\,.$$
This motivates the definition
\begin{equation}\label{xisharp} \xi^{\sharp}(x,p)~\doteq~\underset{\xi\geq 0}{\mathrm{argmax}}~H(x,\xi,p)
~=~p\,L'\Big((\lambda+r)x-(\lambda+\mu)px\Big).
\end{equation}
On the other hand, if $(\lambda+r)x-(\lambda+\mu)px~\geq~1$, 
then the function 
$\xi\mapsto H(x,\xi,p)$ is monotone increasing 
and we define $ \xi^{\sharp}(x,p)\doteq +\infty$.
Observe that
\begin{equation}\label{H-xi}
H_{\xi\xi}(x,\xi,p)~\leq~0\,,\qquad\qquad
\begin{cases}
H_{\xi}(x,\xi,p)&> ~0\qquad\forall 0\leq\xi<\xi^{\sharp}(x,p) \,,\\[4mm]
H_{\xi}(x,\xi,p)&<~0\qquad\forall \xi>\xi^{\sharp}(x,p)\,.
\end{cases}
\end{equation}
We regard the first equation in (\ref{ode1}) as an implicit ODE for the function $V$. For every $x\geq 0$ and $p\in [0,1]$, if $rV(x)>H^{\max}(x,p)$, then this equation has no solution. On the other hand, when
\[
0~\leq~r V(x)~\leq~H^{\max}(x,p),
\]
the implicit ODE (\ref{ode1})  can equivalently be written as
a differential inclusion (Fig.~\ref{f:g53}):
\begin{equation}\label{di1}
V'(x)~\in~\Big\{ F^-(x,V,p)\,,~F^+(x,V,p)\Big\}\,.\end{equation}

\begin{figure}
\centering
\includegraphics[scale=0.45]{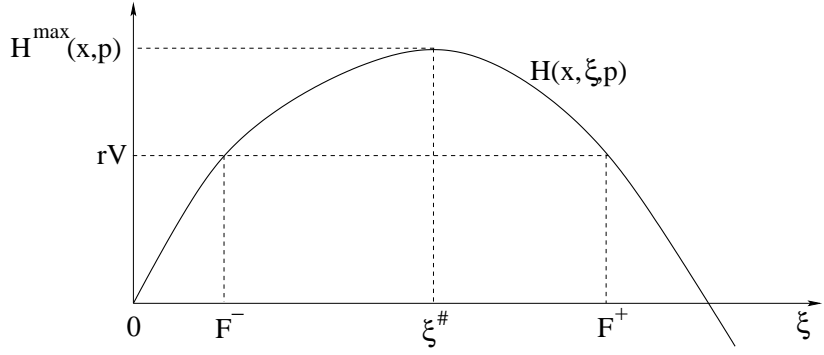}
\label{f:g53}
\caption{\small In the case where
$(\lambda+r)x- (\lambda+\mu)px <1$, the the 
Hamiltonian function $\xi\mapsto H(x,\xi,p)$ has a global maximum
$H^{max}(x,p)$. For $rV\leq H^{max}$, the values
$F^-(x,V, p)\leq \xi^\sharp(x,p)\leq F^+(x,V,p)$ 
are well defined.
}
\end{figure}

\begin{remark} 
Recalling  (\ref{detode}), we observe that
\begin{itemize}
\item 
The value $V'=F^+(x,V,p)\geq \xi^\sharp(x,p)$ corresponds to the choice of an optimal control such that
$\dot x<0$. 
\item The value $V'=F^-(x,V,p)\leq \xi^\sharp(x,p)$ corresponds to the choice of an optimal control such that
$\dot x>0$.
\item When $rV= H^{max}(x,p)$, then 
the value $V'=F^+(x,V,p)=F^-(x,V,p)= \xi^\sharp(x,p)$ corresponds to the unique control such that
$\dot x=0$. 
\end{itemize}
\end{remark}

Since $\xi\mapsto H(x,\xi,p)$ is concave down, the functions $F^\pm$ satisfy the following monotonicity properties (Fig.~\ref{f:g53})
\begin{itemize}
\item[{\bf (MP)}] {\it 
For any fixed $x,p$, the map $V\mapsto F^+(x,V,p)$ is decreasing, 
while $V\mapsto F^-(x,V,p)$ is increasing.}
\end{itemize}


For $V'~=~F^-$, the second ODE in (\ref{ode1}) can be written as 
\[
p'(x)~=~G^-\bigl(x,V(x),p(x)\bigr),
\]
where 
\begin{equation}\label{G-}
G^-(x,V,p)~\doteq~\dfrac{(r+\lambda)(p-1)}{H_{\xi}\bigl(x,F^-(x,V,p),p\bigr)}~\leq~0\,.
\end{equation}
\subsection{Construction of a solution.}
Consider the function
\begin{equation}\label{Wdef} W(x)~\doteq~\frac{1}{r}L\bigl((r-\mu)x\bigr),\end{equation}
with the understanding that $W(x)=+\infty$ if $(r-\mu)x\geq 1$.
Notice that $W(x)$ is the total cost of keeping the debt constantly
equal to $x$ 
(in which case there would be no bankruptcy and hence $p\equiv 1$).

Moreover,
denote by $(V_B(\cdot),p_B(\cdot))$  
the solution to the system of ODEs
\begin{equation}\label{ode2}
\begin{cases}
V'(x)&= ~F^-(x,V(x),p(x)) \,,\\[4mm]
p'(x)&=~G^{-}(x,V(x),p(x))\,,
\end{cases}
\end{equation}
with terminal conditions 
\begin{equation}\label{td2}
V(x^*)~=~B,\qquad\qquad p(x^*)~=~\theta(x^*)\,.
\end{equation}
Next, consider the point
\begin{equation}\label{x1}
x_1~\doteq~\inf \Big\{ x\in [0, x^*]\,;~~V_B(x)<W(x)\Big\},\end{equation}
and call $V_1(\cdot)$ the solution to the backward Cauchy problem
\begin{equation}\label{V1ode}
\begin{cases}
V'(x)&=~F^-(x, V(x), 1)\,,\qquad\qquad x\in [0, x_1],\\[3mm]
V(x_1)&=~W(x_1).
\end{cases}
\end{equation}
We will show that a feedback equilibrium solution to the 
debt management problem is obtained as follows (see Fig.~\ref{f:g55}).
\begin{align}
V^*(x)&=\begin{cases}V_1(x)\qquad \hbox{if}\quad x\in [0, x_1],\\ &\\
V_B(x)\qquad \hbox{if}\quad x\in [x_1, x^*].\end{cases}\label{sol1}\\
p^*(x)&=\begin{cases}1,&\textrm{ if }x\in [0, x_1],\\ &\\
p_B(x),&\textrm{ if }x\in \,]x_1, x^*].\end{cases}\label{sol2}\\
u^*(x)&=\begin{cases}\displaystyle\underset{\omega\in [0,1]}{\mathrm{argmin}}
\left\{L(\omega) - \frac{(V^*)'(x)}{p^*(x)}\omega\right\},&\textrm{ if }x\ne x_1,\\ &\\
(r-\mu)x_1,&\textrm{ if }x= x_1\,.\end{cases}\label{sol3}
\end{align}

\begin{figure}
\centering
\includegraphics[scale=0.45]{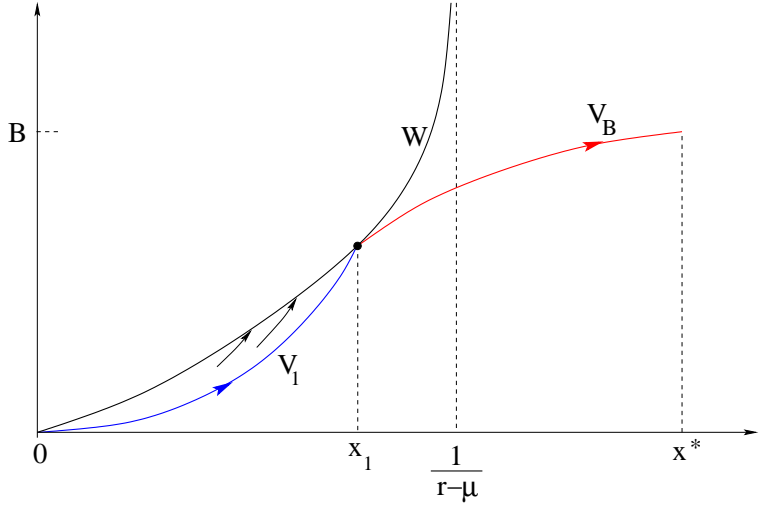}
\label{f:g55}
\caption{\small Constructing the equilibrium solution 
in feedback form. For an initial value of the debt $x(0)\leq x_1$,
the debt increases until it reaches $x_1$, then it is held at the 
constant value $x_1$. If the initial debt is $x(0)> x_1$, the debt 
keeps increasing until it reaches bankruptcy in finite time.}
\end{figure}

\begin{theorem}\label{t:41}
Assume that the cost function $L$ satisfies the assumptions {\bf (A)}, and moreover $L((r-\mu) x^*)>rB$.
Then the functions $V^*, p^*, u^*$ in (\ref{sol1})--(\ref{sol3}) are well defined, and provide an equilibrium solution to the
debt management problem, in feedback form. 
\end{theorem}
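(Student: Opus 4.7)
My plan proceeds in four steps: (a) well-posedness of the backward ODE system (\ref{ode2})--(\ref{td2}) for $(V_B,p_B)$ on some interval $[x_1,x^*]$ with $V_B(x_1)=W(x_1)$; (b) the identity $p_B(x_1)=1$ at the matching point; (c) well-posedness of (\ref{V1ode}) on $[0,x_1]$ with $V_1(0)=0$; and (d) verification of the two conditions of Definition~\ref{def:eqsol}. For (a), the hypothesis $L((r-\mu)x^*)>rB$ combined with $\theta(x^*)\in[0,1]$ gives
\[
rV_B(x^*)~=~rB~<~rW(x^*)~=~L\bigl((r-\mu)x^*\bigr)~\leq~L\bigl((\lambda+r)x^*-(\lambda+\mu)\theta(x^*)x^*\bigr)~=~H^{\max}\bigl(x^*,\theta(x^*)\bigr),
\]
placing the terminal data strictly in the region where $F^-$ and $G^-$ are smooth, so a local $C^1$ solution exists with $V_B$ increasing and $p_B$ decreasing (from $F^-\geq 0$, $G^-\leq 0$). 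Leftward continuation can only fail when $(V_B,p_B)$ meets the set $\{rV=H^{\max}\}$, and by the identity $H^{\max}(x,p)=L((\lambda+r)x-(\lambda+\mu)px)$ together with $p_B\leq 1$, this first meeting occurs precisely at the point $x_1$ of (\ref{x1}), where $V_B(x_1)=W(x_1)$ by continuity.

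Step (b) is the \emph{main obstacle}: the ODE for $p_B$ becomes singular at $x_1$ because $H_\xi\to 0$ there, so continuity of $p_B$ cannot be read off the equation. I would bypass the singular ODE by exploiting the equivalence between the second-order system (\ref{ode1}) and the integral pricing formula (\ref{pB=}). Concretely, $p_B(x_0)$ must coincide with the right-hand side of (\ref{pB=}) evaluated along the trajectory of (\ref{detode}) from $x_0$ to $x^*$ under $u^*$ given by (\ref{sol3}). As $x_0\to x_1^+$, the drift of (\ref{detode}) at $x_1$ vanishes (since $u^*(x_1)=u^\sharp(x_1,1)=(r-\mu)x_1$), so $T_b(x_0)\to+\infty$, and (\ref{pB=}) then forces $p_B(x_0)\to 1$.

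For (c), at $x_1$ we have $rV_1(x_1)=H^{\max}(x_1,1)$, so $V_1'(x_1)=\xi^\sharp(x_1,1)=L'((r-\mu)x_1)$, strictly larger than $W'(x_1)=\tfrac{r-\mu}{r}L'((r-\mu)x_1)$ when $\mu>0$ (the case $\mu=0$ requires a second-order comparison). Hence $V_1<W$ on a left neighbourhood of $x_1$, and a simple contradiction argument, at any putative interior crossing $x_2\in(0,x_1)$ one would have $V_1'(x_2)=\xi^\sharp(x_2,1)>W'(x_2)$ contradicting $V_1<W$ on $(x_2,x_1)$, propagates this to all of $(0,x_1)$, keeping $F^-(\cdot,V_1,1)$ smooth. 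An asymptotic analysis near $x=0$ using $H(x,\xi,1)\sim(r-\mu)x\,\xi$ for small $x$ and $\xi\leq L'(0)$ yields $V_1(x)\sim c\,x^{r/(r-\mu)}\to 0$, so $V_1(0)=0$. Together with (a)--(b), $(V^*,p^*)$ is continuous on $[0,x^*]$ and satisfies (\ref{bdc1}), with matching derivatives $V_1'(x_1)=V_B'(x_1)=L'((r-\mu)x_1)$ at the corner.

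Finally, for (d), the pricing consistency (ii) follows from (b) for $x_0>x_1$, while for $x_0\leq x_1$ every Carath\'eodory solution of (\ref{fode}) monotonically approaches $x_1$ without ever reaching $x^*$ (since $u^*<u^\sharp$ gives $\dot x>0$ on $(0,x_1)$), so $T_b=+\infty$ and (\ref{pB=}) yields $p^*(x_0)=1$, matching (\ref{sol2}). Condition (i) is a standard verification argument: $V^*$ solves (\ref{ode1}) classically away from $x_1$ and $u^*$ is the pointwise minimizer in the Hamiltonian, so for any admissible $u(\cdot)$ with trajectory $x(\cdot)$ from (\ref{cont1}),
\[
\tfrac{d}{dt}\bigl[e^{-rt}V^*(x(t))\bigr]~\geq~-e^{-rt}L(u(t)),
\]
with equality for $u=u^*$; integrating from $0$ to $T_b$ with $V^*(0)=0$ and $V^*(x^*)=B$ yields the cost inequality, hence optimality. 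The nonuniqueness of Carath\'eodory solutions at $x_1$ (one stays there, another moves to $x^*$) is harmless since both produce the common cost $W(x_1)=V_B(x_1)$.
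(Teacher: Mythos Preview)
Your verification strategy in step (d) matches the paper, but step (b) contains a genuine gap, and step (a) rests on a mischaracterization of $x_1$.

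\textbf{On step (a).} The point $x_1$ is \emph{not} where the backward system (\ref{ode2})--(\ref{td2}) breaks down. Since $H^{\max}(x,p)$ is decreasing in $p$, the condition $V_B(x)<W(x)=H^{\max}(x,1)/r$ only guarantees $rV_B(x)<H^{\max}(x,p_B(x))$; at $x_1$ one has $V_B(x_1)=W(x_1)$ but if $p_B(x_1)<1$ then $rV_B(x_1)<H^{\max}(x_1,p_B(x_1))$ strictly, so $F^-,G^-$ are still smooth and the solution can be continued past $x_1$. The paper's Remark following the theorem explicitly uses this possibility to discuss non-uniqueness. So $x_1$ is simply the first crossing of $V_B$ with $W$, defined by (\ref{x1}); nothing more.

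\textbf{On step (b).} The argument is circular. Along a trajectory of (\ref{detode}) starting at $x_0>x_1$, the drift is $H_\xi\bigl(x,V_B'(x),p_B(x)\bigr)$, evaluated with $p_B$, not with $p=1$. You compute the limiting drift at $x_1$ using $u^*(x_1)=(r-\mu)x_1$ and $p=1$, but those are the \emph{left} values from (\ref{sol2})--(\ref{sol3}); the trajectory from $x_0>x_1$ never enters $[0,x_1]$. If $p_B(x_1)<1$, then $rV_B(x_1)=H^{\max}(x_1,1)<H^{\max}(x_1,p_B(x_1))$, hence $F^-(x_1,V_B(x_1),p_B(x_1))<\xi^\sharp(x_1,p_B(x_1))$ strictly and $H_\xi>0$ at $x_1^+$. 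Then $T_b(x_0)=\int_{x_0}^{x^*}H_\xi^{-1}\,dx$ stays bounded as $x_0\to x_1^+$, and (\ref{pB=}) gives $p_B(x_1^+)<1$: perfectly self-consistent. Your argument assumes the conclusion in order to get the vanishing drift.

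\textbf{How the paper avoids this.} The paper never claims $p_B(x_1)=1$. Definition~\ref{def:eqsol} requires $p^*$ only to be \emph{piecewise} Lipschitz, so a jump of $p^*$ at $x_1$ is permitted, and condition (ii) asks only that \emph{at least one} Carath\'eodory solution of (\ref{fode}) satisfy (\ref{pB=}). For $x_0\in\,]0,x_1]$ the paper selects the solution that stays in $]0,x_1]$ (drift $\geq 0$ there, vanishing at $x_1$ because $p^*\equiv 1$ and $u^*(x_1)=(r-\mu)x_1$), so $T_b=+\infty$ and (\ref{pB=}) gives $1=p^*(x_0)$. For $x_0\in\,]x_1,x^*]$ the paper derives (\ref{pB=}) \emph{from} the ODE for $p_B$ (not the other way around): along the trajectory $\dot x=H_\xi$, the second equation in (\ref{ode1}) becomes $\frac{d}{dt}\ln(1-p(x(t)))=r+\lambda$, which integrates to (\ref{pB=}) at $t=T_b$.

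Once you drop step (b), your (c) still works (it uses only $p^*\equiv 1$ on $[0,x_1]$, which is by definition), and your (d) is essentially the paper's argument. But the claim of ``matching derivatives $V_1'(x_1)=V_B'(x_1)$'' in your last paragraph also presumes (b) and should be removed; the paper's verification in step~{\bf 3} needs only that $V^*$ is Lipschitz and satisfies the HJB equation away from $x_1$.
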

\begin{proof}
\begin{enumerate}[leftmargin=*,label=\textbf{\arabic*.}]
\item[]
\item The solution of (\ref{ode2})-(\ref{td2}) satisfies
the obvious bounds
\[V'\geq 0,\qquad p'\leq 0,\qquad V(x)~\leq ~B, \qquad  p(x)\in [\theta(x^*),1].\]
We begin by proving that the function $V_B$ 
is well defined
and strictly positive for $x\in \,]x_1, x^*]$.

To prove that 
$$V_B(x)~>~0\quad\forall x\in ]x_1,x^*]\,,$$
assume, on the contrary, that $V_B(y)=0$ for some $y>x_1\geq 0$.
By the properties of the function $F^-$ (see Fig.~\ref{f:g53}) it
follows
\begin{equation}\label{FLIP}F^-(x,V,p)~\leq~C_2 V\,,\end{equation}
for some constant $C_2$ and all $x\in [y, x^*]$, $p\in [\theta(x^*),1]$.   Hence, for any solution of (\ref{ode2}), 
 $V(y)=0$ implies $V(x)= 0$ for all $x\geq y$,
providing a contradiction.

Next, observe that the functions $F^-, G^-$ are locally Lipschitz continuous
as long as $0\leq V<H^{max}(x,p)$.
Moreover, $V(x)<W(x)$ implies
$$V(x)~<~W(x)~=~H^{max}(x,1)~\leq~H^{max}(x, p(x)).$$
Therefore, the functions $V_B, p_B$ are well defined on the 
interval $[x_1, x^*]$.   
\item[]
\item If $x_1=0$ the construction of the functions $V^*, p^*, u^*$
is already completed in step {\bf 1.}.    
In the case where $x_1>0$, we claim that
the function $V_1$ is well defined and satisfies
\begin{equation}\label{V1prop}
0~< ~V_1(x)~<~ W(x)\qquad\qquad\hbox{for} ~0<x<x_1\,.\end{equation}
Indeed, if $V_1(y)=0$ for some $y>0$, the Lipschitz property 
(\ref{FLIP}) again implies that $V_1(x)=0$ for all $x\geq y$.
This contradicts the terminal condition in (\ref{V1ode}).

To complete the proof of our claim (\ref{V1prop}), 
it suffices to show that 
\begin{equation}\label{W'}W'(x)~<~F^-(x, W(x), 1)\qquad\qquad\forall x\in \,]0, x_1].\end{equation}
This is true because
\begin{align*}
W'(x)&=\displaystyle~\frac{r-\mu}{r}\, L'\bigl(r-\mu)x\bigr)~=~\frac{r-\mu}{r} \,\xi^\sharp(x,1)~<~\xi^\sharp(x,1)\\[3mm]
&\displaystyle=~F^-\bigl(x,H^{max}(x,1), 1\bigr)~=~F^-(x,W(x),1). 
\end{align*}
\item[]
\item In the remaining steps,  we show that $V^*, p^*, u^*$ provide an equilibrium solution. Namely, they satisfy the properties (i)-(ii) 
in Definition \ref{def:eqsol}.   

To prove (i), call $V(\cdot)$ the value function 
for the optimal control problem (\ref{cost1})-(\ref{cont1}). 
 
For any initial value,  $x(0)=x_0$, in both cases $x_0\in [0, x_1]$ and $x_0\in \,] x_1, x^*]$, 
the feedback control $u^*$ in  (\ref{sol3})   
yields the cost $V^*(x_0)$. This implies
\[
V(x_0)~\leq ~V^*(x_0)\,.
\]  
To prove the converse inequality we need to show that, for any measurable control $u:[0,+\infty[\,\mapsto [0,1]$, calling $t\mapsto x(t)$ the solution to 
\begin{equation}\label{ode7}
\dot{x}~=~\left(\dfrac{\lambda+r}{p_{x_1}(x)} -\lambda -\mu\right) x - \dfrac{u(t)}{p_{x_1}(x)}\,,\qquad\qquad x(0)~=~x_0,
\end{equation}
one has
\begin{equation}\label{upVx_1}
\int_{0}^{T_b} e^{-rt} L(u(t))dt + e^{-rT_b}B~\geq~V^*(x_0),
\end{equation}
where 
$$T_b
~=~\inf\, \bigl\{t\geq 0\,;~~x(t)=x^*\bigr\}$$ is the bankruptcy time
(possibly with $T_b
=+\infty$).

For $t\in [0, T_b
]$, consider the
absolutely continuous function 
\[\phi^{u}(t)~\doteq~ \int_{0}^te^{-rs} L(u(s))ds + e^{-rt}V^*
(x(t)).\]
At any Lebesgue point $t$ of $u(\cdot)$, we compute
\begin{align*}
&\displaystyle \frac{d}{dt}\phi^{u}(t)~=~e^{-rt} \Big[L(u(t))-rV^*(x(t))+
(V^*)'(x(t))\cdot\dot{x}(t)\Big]\\[3mm]
&=\displaystyle ~e^{-rt}\left[L(u(t))-rV^*(x(t
))+(V^*)'(x(t))\left(\left(\frac{\lambda+r}{p^*(x(t))}-\lambda-\mu\right)x(t)-\frac{u(t)}{p^*(x(t))}\right)\right]\\[3mm]
&\displaystyle\ge~ e^{-rt}\left[ \min_{\omega\in[0,1]}
\left\{L(\omega)-\frac{(V^*)'(x(t))}{p^*(x(t))}\,\omega\right\} 
+ \left(\frac{\lambda+r}{p^*(x(t))}-\lambda-\mu\right)
x(t)(V^*)'(x(t))-rV^*(x(t))\right]\\[3mm]
&= ~e^{-rt}\Big[H\left(x(t),(V^*)'(x(t)), p^*(x(t))\right)-rV^*(x(t))\Big]~=~ 0.
\end{align*}
Therefore,
\[V^*(x_0)~=~\phi^{u}(0)~\leq~ \lim_{t
\rightarrow T_b-
}\phi^{u}(t
)~=~\int_{0}^{T_b
} e^{-rt} L(u(t))dt + e^{-rT_b
}B,\]
proving (\ref{upVx_1}).
\item[]
\item It remains to check (ii).  The case $x_0=0$ is trivial.
Two main cases will be considered.

{\it CASE 1: $x_0\in \,]0,x_1]$.} ~
Then there exists a solution $t\mapsto x(t)$
of (\ref{fode}) such that 
$p(t)=1$ and $x(t)\in \,]0,x_1]$ for all $t>0$. Moreover, 
\[
\lim_{t\to +\infty}~x(t)~=~x_1\,.
\]
In this case, $T_b=+\infty$ and   (\ref{pB=}) holds.
\v
{\it CASE 2:  $x_0\in ]x_1,x^*]$.}  ~ Then $x(t)>x_1$ for all 
$t\in [0,T_b]$. This implies
\[
\dot{x}(t)~=~H_{\xi}(x(t),V_B(x(t)),p_B(x(t)))\,.
\]
From the second equation in (\ref{ode1}) it follows 
\[\dot p(t)~=~p'(x(t))\dot x(t)~=~(r+\lambda)(p(t)-1),\]
\[
\frac{d}{dt} \ln (1-p(x(t)))~=~(r+\lambda)\,.
\]
Therefore, for every $t\in [0, T_b]$ one has
\[
p(x(0))~=~1-(1-p(x(t)))\cdot e^{-(r+\lambda)t}.
\]
Letting $t\to T_b$ we obtain
\[p(x_0)~=~1-(1-\theta(x^*)))\cdot e^{-(r+\lambda)T_b},\]
proving (\ref{pB=}).
\end{enumerate}
\end{proof}

\begin{remark}
The construction described at (\ref{sol1})--(\ref{sol3}) uniquely determines a feedback equilibrium  solution to the debt management problem.  

In general, however, we cannot rule out the possibility that a second solution exists. Indeed, if the solution
$V_B, p_B$ of (\ref{ode2})-(\ref{td2}) can be prolonged backwards to the entire interval $[0, x^*]$, then we could replace (\ref{sol1})-(\ref{sol2}) 
simply by $V^*(x)=V_B(x)$, $p^*(x)= p_B(x)$ for all $x\in [0, x^*]$. This would yield a second solution.

We claim that no other solutions can exist.   
This is based on the fact that the graphs of $W $ and 
$V_B$ cannot have any  other intersection, in addition to $0$ and $ x_1$.
Indeed, assume on the contrary that $W(x_2)= V_B(x_2)$ for some
$0<x_2<x_1$ (see Fig.~\ref{f:g56}).  Since $p_B(x_2)<1$ and
$W'(x_2)\leq V_B'(x_2)$, the inequalities
$$rW(x_2)~=~H(x_2, W'(x_2),1)~<~H(x_2, W'(x_2), p_B(x_2))~\leq~
H(x_2, V'_B(x_2), p_B(x_2))~=~rV_B(x_2)$$
yield a contradiction.

Next,
let $V^\dagger, p^\dagger$ be 
any equilibrium solution and define
$$x^\dagger~\doteq~\sup\bigl\{x\in [0,x^*]\,;~~p(x)=1\bigr\}.$$
Then
\begin{itemize}
\item On $]x^\dagger, x^*]$ the functions $V^\dagger, p^\dagger $ 
provide a solution to the backward Cauchy problem (\ref{ode2})-(\ref{td2}).

\item On $]0, x^\dagger]$ the function $V^\dagger$ provides the value function
for the optimal control problem 
$$\hbox{minimize:}~\int_0^\infty e^{-rt} L(u(t))\, dt$$
subject to the dynamics (with $p\equiv 1$)
$$\dot x ~=~(r-\mu) x - u\,,$$
and the state constraint $x(t)\in [0, x^\dagger]$ for all $t\geq 0$.
\end{itemize}
The above implies
\[\begin{cases}
V^\dagger(x)~&= ~V_B(x), \textrm{ if } x\in [x^\dagger,x^*],\\[3mm]
V^\dagger(x)~&\leq ~W(x), \textrm{ if } x\in [0, x^\dagger].
\end{cases}\]
Since $V^\dagger$ must be continuous at 
the point $x_2$, by the previous analysis this is possible only 
if $x_2=0$ or $x_2=x_1$.
\end{remark}

\begin{figure}
\centering
\includegraphics[scale=0.45]{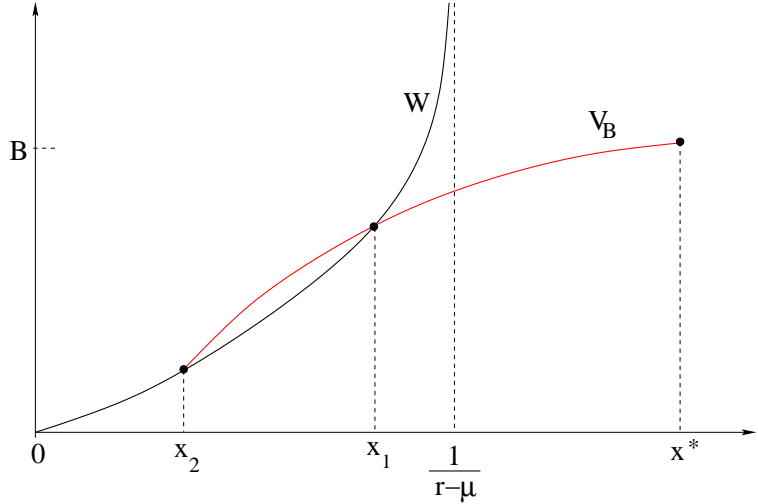}
\label{f:g56}
\caption{\small By the monotonicity properties of the Hamiltonian function $H$ in (\ref{H2}), the graphs of $V_B$ and $W$ cannot have 
a second intersection at a point $x_2>0$.}
\end{figure}

\section{Dependence on the bankruptcy threshold $x^*$.}

In this section we study the behavior of the value function $V_B$ 
when the maximum size $x^*$ of the debt, at which bankruptcy is declared,
becomes very large. 

For a given  $x^*>0$, we denote by $V_B(\cdot,x^*)$, $p_B(\cdot, x^*)$ 
the solution to the system (\ref{ode2})  with terminal data
(\ref{td2}).  
 Letting $x^*\to \infty$, we wish to understand whether the value function $V_B$  remains positive, or 
 approaches zero  uniformly on bounded sets. 
Toward this goal, we introduce the constant
\begin{equation}\label{M1def}
M_1~\doteq~\frac{2rB}{(r-\mu)L'(0)}\,.\end{equation}
From (\ref{ode1}) and (\ref{H2}) it follows 
\[
\frac{V_B'(x, x^*)}{p_B(x, x^*)}~\leq~\frac{rV_B(x,x^*)}{(r-\mu)x-1}\,.
\]
In turn, if $x^*> M_1$, this implies 
\[
\frac{V'_B(x, x^*)}{p_B(x, x^*)}~\leq~L'(0),\qquad\forall x\in [M_1, x^*]\,.
\]
Calling 
$u=u^*(x)$ the optimal feedback control, by (\ref{u^*}) we have
\begin{equation}\label{zero-u}
u^*(x)~=~0,\qquad\forall x\in \bigl[M_1,\,x^*\bigr]\,.
\end{equation}
In this case, the Hamiltonian function takes a simpler form, namely 
\begin{align*}
H(x,V',p)&=~\bigl[(\lambda+r)-(\lambda+\mu)p\bigr]\cdot \frac{V'x}{p}\,,\\
H_{\xi}(x,V',p)&=~\bigl[(\lambda+r)-(\lambda+\mu)p\bigr]x\,.
\end{align*}
Therefore, the system of ODEs (\ref{ode2}) can be written as
\begin{equation}\label{ode3}
\begin{cases}
V'&=~\dfrac{rp}{[(\lambda+r)-(\lambda+\mu)p]x}\, V \,,\\[6mm]
p'&=~(\lambda+r)\cdot \dfrac{p(p-1)}{[(\lambda+r)-(\lambda+\mu)p]\, x }\,.
\end{cases} 
\end{equation}

The second ODE of in (\ref{ode3}) is equivalent to 
\[\dfrac{d}{dx}\ln\Big(\dfrac{(1-p(x))^{r-\mu}}{p(x)^{r+\lambda}}\Big)~=~
\dfrac{r+\lambda}{x}.\]
Solving backward the above ODE with the terminal data $p(x^*)=\theta(x^*)$, 
we obtain
\begin{equation}\label{Im-p1}
p_B(x,x^*)~=~\dfrac{\theta(x^*) x^*}{x}\cdot \left(\dfrac{1-p_B(x,x^*)}{1-\theta(x^*)}\right)^{\frac{r-\mu}{r+\lambda}}\qquad\forall x\in \bigl[M_1,\,x^*\bigr]\,.
\end{equation}

Therefore,
\begin{equation}\label{lowbp}
p_B(x,x^*)~\geq~\dfrac{\left(\dfrac{\theta(x^*)x^*}{x}\right)^{\frac{r+\lambda}{r-\mu}}}{ 1+ \left(\dfrac{\theta(x^*)x^*}{x}\right)^{\frac{r+\lambda}{r-\mu}}}\qquad\forall x\in \bigl[M_1,\,x^*\bigr]\,.
\end{equation}

Different cases will be considered, depending on the properties of 
the function $\theta(\cdot)$.   By obvious 
modeling considerations, we shall always assume
$$\theta(x)\in [0,1],\qquad\qquad \theta'(x)\leq x\qquad\qquad\forall 
x\geq 0.$$
We first study the case where $\theta$ has compact support.  Recall that $M_1$ is the constant in (\ref{M1def}).

\begin{lemma} Assume that 
\begin{equation}\label{a-theta-1}
\theta(x)~=~0\qquad\forall x\geq M_2\,,
\end{equation}
for some constant $M_2\geq M_1$.  Then, for 
any $x^*>M_2$, the solution $V_B(\cdot, x^*)$, $p_B(\cdot, x^*)$ of 
(\ref{ode2})-(\ref{td2}) satisfies
\[V_B(x,x^*)~=~B\qquad\mathrm{and}\qquad p_{B}(x,x^*)~=~0 \qquad\forall x\in \bigl[M_2,\,x^*\bigr]\,.\]
\end{lemma}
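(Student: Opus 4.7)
The plan is to exploit the fact that both equations in the simplified ODE system \eqref{ode3} have right-hand sides that vanish identically when $p=0$, so that the pair $(V,p)\equiv (B,0)$ is literally a constant solution of the backward Cauchy problem. The task then reduces to an ODE uniqueness argument.

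First, I would note that since $x^*>M_2\geq M_1$, the reasoning that led from \eqref{ode1}--\eqref{H2} to the simplified system \eqref{ode3} applies on the whole interval $[M_2,x^*]$: the bound $V'_B/p_B\leq L'(0)$ forces $u^*(x)=0$ by \eqref{u^*}, so $(V_B,p_B)$ solves \eqref{ode3} on this interval with terminal data
\begin{equation*}
V_B(x^*,x^*)=B,\qquad p_B(x^*,x^*)=\theta(x^*)=0,
\end{equation*}
the last equality coming from the hypothesis \eqref{a-theta-1} together with $x^*>M_2$.

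Next, observe that the constants $\overline V(x)\equiv B$ and $\overline p(x)\equiv 0$ satisfy \eqref{ode3} on $[M_2,x^*]$, because the right-hand side of the second equation equals $(\lambda+r)\cdot 0\cdot(0-1)/\bigl([(\lambda+r)-0]\,x\bigr)=0$, and consequently the right-hand side of the first equation is $r\cdot 0\cdot B/(\ldots)=0$. Moreover these constants match the terminal data at $x=x^*$.

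To finish, I would invoke uniqueness for the backward Cauchy problem. The second equation in \eqref{ode3} decouples from $V$, and its right-hand side
\begin{equation*}
f(x,p)\;=\;(\lambda+r)\,\frac{p(p-1)}{[(\lambda+r)-(\lambda+\mu)p]\,x}
\end{equation*}
is of class $C^1$ in $(x,p)$ on $\{x\geq M_2,\;p\in[0,1]\}$ (the denominator is bounded below by $(\lambda+r)-(\lambda+\mu)>0$ when restricted to $p\in[0,1]$ provided $r>\mu$, and $x\geq M_2>0$). Hence $\overline p\equiv 0$ is the \emph{only} solution of $p'=f(x,p)$ with $p(x^*)=0$, giving $p_B(\cdot,x^*)\equiv 0$ on $[M_2,x^*]$. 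Plugging this into the first equation of \eqref{ode3} yields $V'_B\equiv 0$, and combined with $V_B(x^*,x^*)=B$ we conclude $V_B(\cdot,x^*)\equiv B$ on $[M_2,x^*]$.

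The only mildly delicate point is verifying that the denominator $(\lambda+r)-(\lambda+\mu)p$ does not vanish on the relevant range of $p$; this is immediate since we are working near $p=0$ and the prospective solution stays at $p=0$. Everything else is a routine application of backward ODE uniqueness once the constant pair $(B,0)$ is identified as a solution.
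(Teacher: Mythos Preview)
Your argument is correct and follows the same overall structure as the paper: reduce to the simplified system \eqref{ode3} on $[M_1,x^*]$, show $p_B\equiv 0$ there, and then read off $V'_B\equiv 0$ from the first equation. The only difference is in how the step $p_B\equiv 0$ is justified: the paper simply plugs $\theta(x^*)=0$ into the implicit integral formula \eqref{Im-p1} for $p_B$, whereas you identify $(B,0)$ as a constant solution of \eqref{ode3} and appeal to backward ODE uniqueness for the decoupled $p$-equation. These are two sides of the same coin (the implicit formula \emph{is} the integrated $p$-equation), though your phrasing is arguably cleaner at $p=0$, since the logarithmic derivation of \eqref{Im-p1} tacitly assumes $p>0$.
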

\begin{proof}
By (\ref{Im-p1}) and (\ref{a-theta-1}), 
for every $x^*>M_2$ one has
\[p_{B}(x,x^*)~=0~\qquad\forall x\in \bigl[M_2,\,x^*\bigr]\,.\]
Inserting this into the first ODE in (\ref{ode3}), we obtain
\[V_B'(x,x^*)~=~0.\]
In turn, this yields 
$V_B(x,x^*)~=~B$ for all $ x\in \bigl[M_2,\,x^*\bigr]$.
This means that bankruptcy instantly occurs if the debt reaches $M_2$.
\end{proof}

\medskip

Next, we now consider that case where $\theta(x)>0$ for all $x$.
\begin{equation}\label{a-theta-2}
\theta(x)~>~0\qquad\forall x\in [0,\infty[\,.
\end{equation}
\begin{lemma}
If $x^*>M_1$ and $\theta(x^*)>0$, then  
\begin{equation}\label{ubV1}
V_B(x,x^*)~=~B\cdot \left(\frac{p_B(x,x^*)x}{\theta(x^*)x^*}\right)^{\frac{r}{r-\mu}}\qquad\forall x\in \bigl[M_1,\,x^*\bigr]\,.
\end{equation}
In particular, for $x\in \bigl[M_1,\,x^*\bigr]$ one has
\begin{equation}\label{bound-V}
B\cdot \left(1+\left(\dfrac{\theta(x^*)x^*}{x}\right)^\frac{r+\lambda}{r-\mu}\right)^{-\frac{r}{r+\lambda}}
~\leq~V_B(x, x^*)~\leq~B\cdot \Big(\frac{x}{\theta(x^*)x^*}\Big)^{\frac{r}{r-\mu}}\,.\end{equation}
\end{lemma}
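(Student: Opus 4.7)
The plan is to split the proof into two parts: first establish the closed-form identity \eqref{ubV1}, and then derive the two inequalities in \eqref{bound-V} from that identity together with the already-proven lower bound \eqref{lowbp}.

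For the identity \eqref{ubV1}, I would look for a conserved relationship between $V_B$ and $p_B x$ along trajectories of the reduced system \eqref{ode3}. Dividing the first equation by $V$ gives
\[
\frac{V_B'(x,x^*)}{V_B(x,x^*)}~=~\frac{r\,p_B(x,x^*)}{[(\lambda+r)-(\lambda+\mu)p_B(x,x^*)]\,x}.
\]
On the other hand, using the second equation, a direct computation shows
\[
\frac{d}{dx}\ln\bigl(p_B(x,x^*)\,x\bigr)~=~\frac{p_B'(x,x^*)}{p_B(x,x^*)}+\frac{1}{x}~=~\frac{(r+\lambda)(p_B-1)+[(\lambda+r)-(\lambda+\mu)p_B]}{[(\lambda+r)-(\lambda+\mu)p_B]\,x}~=~\frac{(r-\mu)\,p_B(x,x^*)}{[(\lambda+r)-(\lambda+\mu)p_B(x,x^*)]\,x},
\]
where the key cancellation $(r+\lambda)-(\lambda+\mu)=r-\mu$ in the numerator is what makes the identity go through. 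Comparing, $(\ln V_B)'=\frac{r}{r-\mu}(\ln(p_B x))'$. Integrating from $x$ to $x^*$ and using the terminal data $V_B(x^*,x^*)=B$, $p_B(x^*,x^*)=\theta(x^*)$, I obtain \eqref{ubV1} after exponentiation.

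For the bounds in \eqref{bound-V}, the upper bound is essentially immediate: since $p_B$ is decreasing with $p_B(0,x^*)\leq 1$, the factor $p_B(x,x^*)\leq 1$ can be dropped in \eqref{ubV1}, yielding $V_B(x,x^*)\leq B(x/(\theta(x^*)x^*))^{r/(r-\mu)}$. For the lower bound, I would insert the estimate \eqref{lowbp}, namely $p_B(x,x^*)\geq A/(1+A)$ with $A:=(\theta(x^*)x^*/x)^{(r+\lambda)/(r-\mu)}$, into \eqref{ubV1}, and then simplify using the identity $x/(\theta(x^*)x^*)=A^{-(r-\mu)/(r+\lambda)}$, which follows from the definition of $A$. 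The exponent bookkeeping $r/(r-\mu)-r/(r+\lambda)=r(\lambda+\mu)/((r-\mu)(r+\lambda))$ should be organized so that the factor $A^{(\alpha-1)/\alpha}/(1+A)$ (with $\alpha=(r+\lambda)/(r-\mu)$) reduces, after raising to the power $r/(r-\mu)$, to the target form $(1+A)^{-r/(r+\lambda)}$.

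The main obstacle I foresee is precisely this last algebraic reduction: naive substitution of \eqref{lowbp} into \eqref{ubV1} produces an expression $B\cdot A^{r(\lambda+\mu)/((r-\mu)(r+\lambda))}/(1+A)^{r/(r-\mu)}$ whose exponents do not visibly collapse to $(1+A)^{-r/(r+\lambda)}$ without additional information. The clean way out, I expect, is to return to the implicit relation \eqref{Im-p1} and re-express $p_B x/(\theta(x^*)x^*)$ directly in terms of $(1-p_B)/(1-\theta(x^*))$, which in combination with the bound on $p_B$ should allow one to rewrite the right-hand side of \eqref{ubV1} in the desired compact form. This pivot between the two equivalent ways of writing $V_B$ is the technical step to watch.
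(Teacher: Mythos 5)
Your derivation of \eqref{ubV1} is correct, and it is essentially the paper's computation in a different parametrization: the paper inverts $x\mapsto p_B(x,x^*)$, integrates $\frac{d}{dp}\ln V_B(\chi(p),x^*)=\frac{r}{r+\lambda}\frac{1}{p-1}$ to obtain the intermediate formula $V_B=B\bigl(\frac{1-p_B}{1-\theta(x^*)}\bigr)^{r/(r+\lambda)}$, and then substitutes \eqref{Im-p1}; you instead integrate directly in $x$ using the cancellation $(r+\lambda)-(\lambda+\mu)=r-\mu$. The two routes are equivalent and both are fine, as is the upper bound in \eqref{bound-V} via $p_B\leq 1$.

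The lower bound is where there is a genuine gap, and the escape route you sketch does not close it. Set $A=(\theta(x^*)x^*/x)^{(r+\lambda)/(r-\mu)}$ and $\beta=(r+\lambda)/(r-\mu)\geq 1$. Combining \eqref{ubV1} with the exact relation $p_B^{\beta}=A\,\frac{1-p_B}{1-\theta(x^*)}$ (which is \eqref{Im-p1}), the claimed inequality $V_B\geq B(1+A)^{-r/(r+\lambda)}$ is \emph{equivalent} to $p_B^{\beta}\geq \frac{A}{1+A}$. Since $p_B\leq 1$, this is strictly stronger than \eqref{lowbp}, which only gives $p_B\geq \frac{A}{1+A}$; so neither raising \eqref{lowbp} to the power $\beta$ nor pivoting to the $(1-p_B)/(1-\theta)$ form can produce it --- the pivot merely restates the same equivalent inequality. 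In fact the equivalent form reduces to $\frac{1-p_B}{1-p_B^{\beta}}\geq 1-\theta(x^*)$, whose left side tends to $1/\beta<1$ as $p_B\to 1$, so it fails when $\theta(x^*)$ is small and $\beta>1$. Concretely, take $\lambda=0$, $r=0.06$, $\mu=0.03$ (so $\beta=2$, $r/(r+\lambda)=1$), $\theta(x^*)=0.01$ and $x=\theta(x^*)x^*$ (so $A=1$): solving $0.99\,p_B^2+p_B-1=0$ gives $p_B\approx 0.620$ and $V_B\approx 0.384\,B$, while the claimed lower bound is $0.5\,B$. So the displayed lower bound is not reachable by your argument and does not appear to hold as stated; note the paper itself only asserts that it ``follows from \eqref{lowbp} and $p_B\leq 1$,'' so you are not missing a trick that the authors supply. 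What \emph{does} follow from substituting \eqref{lowbp} into \eqref{ubV1} is exactly the expression you computed, $V_B\geq B\,A^{\frac{r(\lambda+\mu)}{(r-\mu)(r+\lambda)}}(1+A)^{-\frac{r}{r-\mu}}$, which is still strictly positive and still yields the qualitative conclusion of Corollary~\ref{cor:low-bV} (uniform positivity of $V_B$ as $x^*\to\infty$ under \eqref{b2}); if you want a rigorous statement, prove and use that corrected bound rather than forcing the algebra into the form \eqref{bound-V}.
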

\begin{proof}
Observe that $x\mapsto p_B(x, x^*)$ is a strictly decreasing function of $x$. 
For a fixed value of $x^*$, let $p\mapsto \chi(p):[\theta(x^*),1[\,\mapsto [0,x^*]$ 
be the inverse function of $p_B(\cdot, x^*)$. 
From (\ref{ode3}), a direct computation yields
\begin{equation}\label{odep}
\begin{cases}
\dfrac{d}{dp}V_B(\chi(p), x^*)&=\dfrac{rp}{[(\lambda+r)-(\lambda+\mu)p]\,\chi(p)}\cdot V_B(\chi(p),x^*)\cdot \chi'(p) \,,\\[6mm]
\dfrac{d}{dp}p_B(\chi(p),x^*)&=(\lambda+r)\cdot \dfrac{p(p-1)}{[(\lambda+r)-(\lambda+\mu)p]\cdot \chi(p) }\cdot \chi'(p)=1\,.
\end{cases} 
\end{equation}
From (\ref{odep}) it follows
\[
\dfrac{d}{dp}~\ln V_B(\chi(p),x^*) ~=~\frac{r}{\lambda+r}\cdot \frac{1}{p-1}\,.
\]
Solving the above ODE with the terminal data $V_B(x^*,x^*)=B$, 
$p_B(x^*,x^*)=\theta(x^*)$,
we obtain
\begin{equation}\label{Im-V1}
V_B(\chi(p), x^*)~=~
\left(\frac{1-p}{1-\theta(x^*)}\right)^{\frac{r}{r+\lambda}}B\,,
\end{equation}
hence
\[
V_B(x,x^*)~=~\left(\frac{1-p_B(x,x^*)}{1-\theta(x^*)}\right)^{\frac{r}{r+\lambda}}B.
\]
Recalling (\ref{Im-p1}), a direct computation yields (\ref{ubV1}). 
The upper and lower bounds for $V_B(x,x^*)$ in (\ref{bound-V}) now 
follow from (\ref{lowbp}) and the inequality 
$p_B(x,x^*)\leq 1$.
\end{proof}

\begin{corollary}\label{cor:infty-th} 
Assume that 
\begin{equation}\label{infty-th}
\limsup_{x\to +\infty}~\theta(x)\, x~=~+\infty.
\end{equation}

Then the value function $V^*= V^*(x, x^*)$ satisfies
\begin{equation}\label{Vto0}
\lim_{x^*\to +\infty} V(x, x^*)~=~0\qquad\forall x\geq 0\,. \end{equation}
\end{corollary}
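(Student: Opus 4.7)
The plan is to combine the explicit upper bound (\ref{bound-V}) with two observations: the value function $V^*(\cdot, x^*)$ is non-decreasing in $x$, and the threshold $x_1 = x_1(x^*)$ admits a uniform a priori bound in $x^*$. Using the $\limsup$ hypothesis (\ref{infty-th}), I extract a sequence $x_n^* \to +\infty$ along which $\theta(x_n^*)\,x_n^* \to +\infty$; in typical applications $\theta(x)x$ is eventually nondecreasing, so this subsequence exhausts the limit, and the statement can be read either as the genuine limit or as convergence along such a maximizing subsequence.

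The key a priori estimate is $x_1(x^*) \leq K$ where $K \doteq \frac{1}{r-\mu}\, L^{-1}(rB)$. Indeed, by the definition of $x_1$ in (\ref{x1}) and continuity, $V_B(x_1,x^*) = W(x_1)$. Since the obvious bound $V_B \leq B$ holds (this is already used in the proof of Theorem~\ref{t:41}), we get $\tfrac{1}{r}L((r-\mu)x_1) \leq B$, giving $x_1 \leq K$ uniformly in $x^*$.

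Now fix $x_0 \geq 0$ and set $y \doteq \max(x_0, M_1, K)$. For $x_n^*$ large enough that $x_n^* > y$, monotonicity gives $V^*(x_0, x_n^*) \leq V^*(y, x_n^*)$, so it suffices to show the latter tends to $0$. I split into two cases according to whether $y \geq x_1(x_n^*)$ or $y < x_1(x_n^*)$.

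\begin{itemize}
\item[] \textit{Case A: $y \geq x_1(x_n^*)$.}  By the construction (\ref{sol1}), $V^*(y, x_n^*) = V_B(y, x_n^*)$, and since $y \geq M_1$, bound (\ref{bound-V}) yields
\[V^*(y, x_n^*)~\leq~B \cdot \left(\dfrac{y}{\theta(x_n^*)\,x_n^*}\right)^{\frac{r}{r-\mu}}.\]
\item[] \textit{Case B: $y < x_1(x_n^*)$.}  This forces $y < K$, which by our choice of $y$ is impossible unless $y = K$, so this case reduces to $x_1(x_n^*) > K \geq y$; since $V_1$ is non-decreasing (because $F^- \geq 0$) and $V_1(x_1) = W(x_1) = V_B(x_1, x_n^*)$, monotonicity on $[0, x_1]$ combined with (\ref{bound-V}) at $x = x_1 \in [M_1, x_n^*]$ gives
\[V^*(y, x_n^*)~\leq~V_B(x_1(x_n^*), x_n^*)~\leq~B \cdot \left(\dfrac{K}{\theta(x_n^*)\,x_n^*}\right)^{\frac{r}{r-\mu}}.\]
\end{itemize}
In either case, $V^*(x_0, x_n^*) \leq B\,(C_{x_0} / (\theta(x_n^*)\,x_n^*))^{r/(r-\mu)}$ with $C_{x_0} \doteq \max(x_0, M_1, K)$, and the right-hand side tends to $0$ by the choice of the sequence.

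The main obstacle is bookkeeping around the threshold $x_1$: the bound (\ref{bound-V}) applies only on $[M_1, x^*]$, and $V^*$ equals $V_B$ only on $[x_1, x^*]$, so one must verify that the relevant monotonicity argument in the region $[0, x_1]$ (handled via Case B) uses the bound at a point $x_1 \in [M_1, x^*]$ rather than outside this interval. The uniform bound $x_1 \leq K$ together with the freedom to take $y \geq K$ is precisely what makes the two regimes compatible and allows a single clean estimate.
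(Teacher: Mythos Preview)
Your approach is essentially the same as the paper's: use the upper bound in (\ref{bound-V}) at a large enough point and then push down by monotonicity of $V^*(\cdot,x^*)$. The paper's proof is two lines: for $x\geq M_1$ it asserts $V^*(x,x^*)=V_B(x,x^*)$ and invokes (\ref{bound-V}); for $x<M_1$ it uses monotonicity to reduce to $x=M_1$.

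Two remarks on your write-up. First, your a~priori bound $x_1(x^*)\leq K$ is exactly what justifies the paper's unproved assertion that $V^*=V_B$ on $[M_1,x^*]$; in fact convexity of $L$ with $L(0)=0$ gives $L(s)\geq L'(0)s$, hence $L^{-1}(rB)\leq rB/L'(0)$ and thus $K< M_1$. So you never need the separate constant $K$: taking $y=\max(x_0,M_1)$ already guarantees $y\geq x_1$. Second, and related, your Case~B is vacuous: you have shown $x_1\leq K$ and chosen $y\geq K$, so $y<x_1$ cannot occur. Your attempt to salvage Case~B (``impossible unless $y=K$, so this case reduces to $x_1>K\geq y$'') is self-contradictory, since $x_1>K$ violates the very bound you proved. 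Simply delete Case~B and the argument collapses to the paper's two-line proof.

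Your observation about $\limsup$ versus $\lim$ is legitimate: with only (\ref{infty-th}) one gets (\ref{Vto0}) along a subsequence, and the paper is slightly loose here (compare the $\lim$ in (\ref{lbig}) with the $\limsup$ in (\ref{infty-th})). This is a point about the statement rather than the proof.
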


Indeed, for $x\geq M_1$ we have $V(x, x^*)= V_B(x, x^*)$, and 
(\ref{Vto0}) follows from the second inequality in (\ref{bound-V}).    When $x<M_1$, since 
the map $x\mapsto V(x, x^*)$ is nondecreasing, we have
\[0~\leq~\lim_{x^*\to\infty} V(x,x^*)~\leq~\lim_{x^*\to\infty} V(M_1,x^*)~=~0\,.\]

\begin{corollary}\label{cor:low-bV} Assume that 
\begin{equation}\label{b2}
R~\doteq~\limsup_{x\to+\infty}~\theta(x)\cdot x~<~+\infty.
\end{equation}
Then
\begin{equation}\label{low-bV}
V_B(x, x^*)~\geq~B\cdot \left(1+\left(\frac{R}{x}\right)^\frac{r+\lambda}{r-\mu}\right)^{-\frac{r}{r+\lambda}}\qquad\forall~ x^*>x>M_1\,.
\end{equation}
Moreover,  the followings holds.
\begin{itemize}
\item [(i)] If
\begin{equation}\label{ca1}
\dfrac{\theta'(x)}{\theta(x)}+\dfrac{1}{x}~\geq~0\qquad\hbox{and}\qquad \theta'(x)~\leq~0\qquad\hbox{for all}~~x>0
\end{equation}
then 
\begin{equation}\label{inf-V}
\inf_{x^*>0}\,V_B(x,x^*)~=~\lim_{x^*\to\infty} V_B(x,x^*)~>~0\qquad\forall x\geq M_1\,. 
\end{equation}

\item [(ii)] Assume that  there exist  $0<\delta<1$  such that 
\begin{equation}\label{ca2}
\delta\cdot \dfrac{\theta'(x)}{\theta(x)}+\dfrac{1}{x}~<~0
\end{equation}
for all $x$ sufficiently large.
Then, for each $x>M_1$, there exists an optimal value 
$x^*=x^*(x)$ such that 
\begin{equation}\label{minV}V_B(x, x^*(x))~=~
\inf_{x^*\geq 0}~V_B(x,x^*).
\end{equation}
\end{itemize}
\end{corollary}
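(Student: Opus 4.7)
The proof splits into the main inequality (\ref{low-bV}) and the two alternatives (i) and (ii), each requiring progressively finer information about $\theta$. For (\ref{low-bV}), I would invoke the lower bound in (\ref{bound-V}) from the preceding lemma, namely
\[V_B(x,x^*)~\geq~B\Bigl(1 + \bigl(\theta(x^*)\,x^*/x\bigr)^{(r+\lambda)/(r-\mu)}\Bigr)^{-r/(r+\lambda)},\]
and note that the map $s\mapsto(1+s^{(r+\lambda)/(r-\mu)})^{-r/(r+\lambda)}$ is monotone decreasing in $s\geq 0$, so substituting $\theta(x^*)x^*\leq R$ yields (\ref{low-bV}). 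The bound $\theta(x^*)x^*\leq R$ is not automatic from the $\limsup$ hypothesis alone, but is guaranteed by the monotonicity of $\theta(\cdot)(\cdot)$ derived in part~(i) below.

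For part~(i), the first assumption in (\ref{ca1}) rewrites as $(\theta(x)x)' = \theta(x)x\bigl(\theta'(x)/\theta(x)+1/x\bigr)\geq 0$, so $\theta(\cdot)(\cdot)$ is non-decreasing on $(0,\infty)$; combined with $\limsup\theta(x)x = R$, this forces $\theta(x)x\leq R$ everywhere and $\theta(x^*)x^*\nearrow R$ as $x^*\to\infty$. The main inequality then gives the uniform positive lower bound $V_B(x,x^*)\geq B(1+(R/x)^{(r+\lambda)/(r-\mu)})^{-r/(r+\lambda)}>0$, hence $\inf_{x^*>0}V_B(x,x^*)>0$. To identify the infimum with the limit at $+\infty$, I would prove that $x^*\mapsto V_B(x,x^*)$ is non-increasing: starting from the exact identity $V_B = B\bigl((1-p_B(x,x^*))/(1-\theta(x^*))\bigr)^{r/(r+\lambda)}$ together with the implicit relation (\ref{Im-p1}), I would differentiate in $x^*$ and use $\theta'(x^*)\leq 0$ and $(\theta(x^*)x^*)'\geq 0$ to sign the resulting derivative.

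For part~(ii), integrating $\delta(\ln\theta(x))' + (\ln x)' < 0$ gives $\theta(x)^\delta x\leq C$ for $x$ large, hence $\theta(x)x\leq C^{1/\delta}x^{1-1/\delta}\to 0$ since $\delta<1$; in particular $\theta(x^*)x^*\to 0$ as $x^*\to\infty$. The lemma's lower bound (\ref{bound-V}) then forces $V_B(x,x^*)\to B$ as $x^*\to\infty$. Meanwhile, by continuous dependence of the ODE solution on the terminal point and the boundary condition $V_B(x^*,x^*)=B$, also $V_B(x,x^*)\to B$ as $x^*\searrow x$. For any finite $x^*>x$ the monotonicity $V_B'(\cdot,x^*)\geq 0$ together with $V_B(x^*,x^*)=B$ gives $V_B(x,x^*)<B$ strictly. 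Thus $x^*\mapsto V_B(x,x^*)$ is continuous on $(x,+\infty)$, stays strictly below $B$, and tends to $B$ at both endpoints, so its infimum is achieved at some interior point $x^*(x)\in(x,+\infty)$, establishing (\ref{minV}).

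The main obstacle is the monotonicity claim in part (i): because $p_B(x,x^*)$ depends on $x^*$ through both the ODE coefficients and the terminal value $p_B(x^*,x^*)=\theta(x^*)$, implicit differentiation of $V_B$ in $x^*$ is delicate, and both halves of (\ref{ca1}) appear to be required to control the sign of the outcome.
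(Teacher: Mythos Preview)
Your approach to (\ref{low-bV}) and to part~(i) matches the paper's. The paper also derives (\ref{low-bV}) directly from (\ref{bound-V}) and (\ref{b2}); your remark that $\theta(x^*)x^*\leq R$ is not literally implied by a $\limsup$ hypothesis is a fair observation, but the paper simply asserts the implication and moves on. For part~(i), the paper carries out exactly the implicit differentiation you describe: setting $Y(x^*)=V_B(x,x^*)$ and $q(x^*)=p_B(x,x^*)$, it uses (\ref{ubV1}) and (\ref{Im-p1}) to obtain
\[
\frac{Y'(x^*)}{Y(x^*)}~=~\frac{r}{r-\mu}\left(\frac{q'(x^*)}{q(x^*)}-\Bigl[\frac{\theta'(x^*)}{\theta(x^*)}+\frac{1}{x^*}\Bigr]\right),
\]
and after an algebraic identity shows that both conditions in (\ref{ca1}) force the right-hand side to be $\leq 0$, so $Y$ is non-increasing. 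This confirms your intuition that both halves of (\ref{ca1}) are needed.

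For part~(ii) your argument is correct but genuinely different from the paper's. The paper reuses the formula for $Y'/Y$ above and shows, via the same algebraic identity together with (\ref{ca2}) and $\theta(x^*)\to 0$, that $Y'(x^*)>0$ for all sufficiently large $x^*$; the minimum then exists by continuity on a compact interval. Your route instead extracts from (\ref{ca2}) the stronger consequence $\theta(x^*)x^*\to 0$, feeds this into the lower bound in (\ref{bound-V}) to get $V_B(x,x^*)\to B$ as $x^*\to\infty$, and combines it with $V_B(x,x^*)\to B$ as $x^*\searrow x$ and the strict inequality $V_B(x,x^*)<B$ in between. This is more elementary in that it avoids the derivative computation entirely, at the price of needing the endpoint behavior at $x^*\searrow x$; the paper's approach gives slightly more, namely eventual monotonicity of $x^*\mapsto V_B(x,x^*)$.
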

\begin{proof}
It is clear that (\ref{low-bV}) is a consequence of (\ref{bound-V}) and (\ref{b2}). We only need to prove (i) and (ii). 
For a fixed  $x\ge M_1$, we consider the functions of the variable $x^*$ alone:
\[
Y(x^*)~\doteq~V_B(x,x^*),\qquad\qquad q(x^*)~\doteq~p_{B}(x,x^*).
\]
Using (\ref{ubV1}) and (\ref{Im-p1}), we obtain
\begin{equation}\label{Y'}
\dfrac{Y'(x^*)}{Y(x^*)}~=~{\frac{r}{r-\mu}}\cdot \left(\frac{q'(x^*)}{q(x^*)}-\Big[\frac{\theta'(x^*)}{\theta(x^*)}+\frac{1}{x^*}\Big]
\right)\,,
\end{equation}
and
\begin{equation}\label{q'}
\frac{q'(x^*)}{q(x^*)}~=~\frac{\theta'(x^*)x^*+\theta(x^*)}{\theta(x^*)x^*}+\frac{r-\mu}{r+\lambda}\cdot \Big(\frac{-q'(x^*)}{1-q(x^*)}+\frac{\theta'(x^*)}{1-\theta(x^*)}\Big)\,.
\end{equation}
This implies 
\begin{multline}\label{re1}
\frac{q'(x^*)}{q(x^*)}-\left[\frac{\theta'(x^*)}{\theta(x^*)}+\frac{1}{x^*}\right]~=~\left[\dfrac{1}{1+\frac{r-\mu}{r+\lambda}\cdot\frac{q(x^*)}{1-q(x^*)}}-1\right]\cdot\left[\frac{\theta'(x^*)}{\theta(x^*)}+\frac{1}{x^*}\right]
\\+\frac{r-\mu}{(r+\lambda) \left(1+\frac{r-\mu}{r+\lambda}\cdot\frac{q(x^*)}{1-q(x^*)}\right)}\cdot \frac{\theta'(x^*)}{1-\theta(x^*)}\,.
\end{multline}
If (\ref{ca1}) holds, then (\ref{Y'}) and (\ref{re1}) imply 
\[
\dfrac{Y'(x^*)}{Y(x^*)}~=~\frac{q'(x^*)}{q(x^*)}-\Big[\frac{\theta'(x^*)}{\theta(x^*)}+\frac{1}{x^*}\Big]~\leq~0\qquad
\hbox{for all} ~x^*>x\geq M_1.
\]
Hence the function $Y$ is non-increasing. This proves (\ref{inf-V}).
\v
To prove (ii), we observe that
\[\limsup_{x^*\to\infty}~\left(\dfrac{1}{1+\frac{r-\mu}{r+\lambda}\cdot\frac{q(x^*)}{1-q(x^*)}}-1\right)~<~0\,,\qquad\qquad \lim_{x^*\to\infty}\theta(x^*)~=~0\,.\]
Hence (\ref{ca2}) and (\ref{re1}) imply  
\[\frac{q'(x^*)}{q(x^*)}-\left[\frac{\theta'(x^*)}{\theta(x^*)}-\frac{1}{x^*}\right]~>~0,\]
for all $x^*$ sufficiently large. By (\ref{Y'}) this yields
\[\dfrac{Y'(x^*)}{Y(x^*)}~>~0\]
for all $x^*$ large enough.
Hence there exists some particular value $x^*(x)\geq x$ where 
the function $x^*\mapsto Y(x^*)= V_B(x, x^*)$ attains its
global minimum. This yields (\ref{minV}).
\end{proof}

\section{Dependence on $x^*$ in the stochastic case}

In this section we study how the value function depends on the
bankruptcy threshold $x^*$, in the stochastic case 
where $\sigma > 0$.
Extensions of  Corollaries \ref{cor:infty-th} and \ref{cor:low-bV}, 
will be proved,  constructing upper and lower bounds
for the solution $V(\cdot, x^*)$, $p(\cdot, x^*)$ of 
the system (\ref{ode0})-(\ref{bdc}), in the form
\begin{equation}\label{Vpb}
V_2(x)~\leq~V(x, x^*)~\leq ~V_1(x), \qquad\qquad 
p_1(x)~\leq~p(x, x^*)~\leq ~p_2(x).\end{equation}
\begin{enumerate}[leftmargin=*,label=\textbf{\arabic*.}]
\item[] 
\item We begin by constructing a suitable pair of functions $V_1,p_1$.
Let $(p_1,\Tilde{V}_1)$ be the  solution to the 
backward Cauchy problem
\begin{equation}\label{pV1}\begin{cases}
r\Tilde{V}_1(x)&= ~\displaystyle \Big(\frac{\lambda+r}{p_1}+\sigma^2\Big)x \Tilde{V}_1'\,, \\[6mm]
(r+\lambda)(p_1-1)&=~\displaystyle\Big(\frac{\lambda+r}{p_1}+\sigma^2\Big) x p'_1\,,\end{cases}
\qquad\qquad
\begin{cases}\Tilde{V}_1(x^*)&= ~B, \\[6mm]
p_1(x^*)&=~\theta(x^*).\end{cases}
\end{equation}
This solution  satisfies
\begin{equation}\label{p1s}
p_1(x)~=~\dfrac{\theta(x^*)x^*}{x}\cdot \left(\dfrac{1-p_1(x)}{1-\theta(x^*)}\right)^{\frac{\sigma^2+\lambda+r}{\lambda+r}},\qquad\qquad 
\lim_{x\to 0+} p_1(x)~=~1\,,
\end{equation}
\begin{equation}\label{V1}
\Tilde{V}_1(x)~=~B\cdot \left(\frac{1-p_1(x)}{1-\theta(x^*)}\right)^{\frac{r}{r+\lambda}},\qquad\qquad \lim_{x\to 0+} \tilde{V}_1(x)~=~0\,.
\end{equation}
Using (\ref{pV1}) and (\ref{p1s}) one obtains
\begin{align*}
-1&\displaystyle=~p'_1(x)\cdot\left(\frac{x}{p_1(x)}+\frac{\sigma^2+r+\lambda}{r+\lambda}\cdot\frac{x}{1-p_1(x)}\right)\\[4mm]
&\displaystyle=~p'_1(x) \cdot\left(\frac{x}{p_1(x)}+\frac{\sigma^2+r+\lambda}{r+\lambda}\cdot \frac{1-\theta(x^*)}{(\theta(x^*)x^*)^{\frac{r+\lambda}{r+\lambda+\sigma^2}}}\cdot \frac{x^{\frac{\sigma^2}{r+\lambda+\sigma^2}}}{p_1(x)^{\frac{\lambda+r}{\lambda+r+\sigma^2}}}\right)\,.
\end{align*}
Since $p_1$ is monotone decreasing, it follows that $p_1''(x)>0$ 
for all $x\in \,]0, x^*[\,$.  In turn, this yields
\[
(r+\lambda)(1-p_1)+\Big(\frac{\lambda+r}{p_1}+\sigma^2\Big)xp_1'+\frac{\sigma^2x^2}{2}p_1''~>~0\,.
\]
Recalling (\ref{Hb2}), we have 
\begin{equation}\label{sp1}
(r+\lambda)(1-p_1)+H_{\xi}(x,\xi,p_1)p_1'+\frac{\sigma^2x^2}{2}p_1''~>~0\qquad\qquad \forall \xi\geq 0.
\end{equation}
Next, differentiating  both sides of the first 
ODE in (\ref{pV1}), we obtain
\[
\left(r-\sigma^2-\frac{\lambda+r}{p_1} +\frac{(\lambda+r)p_1'}{p_1^2}x\right)\cdot \Tilde{V}_1'~=~\left(\frac{\lambda+r}{p_1}+\sigma^2\right)x \Tilde{V}_1''\qquad\forall x\in \,]0,x^*[\,.
\]
This implies 
\[\Tilde{V}_1''(x)~<~0\qquad\forall x\in \,]0,x^*[\,.\]
Recalling (\ref{Hb1}) and (\ref{pV1}), we obtain
\begin{equation}\label{sV1}
-r\Tilde{V}_1+H(x,\Tilde{V}_1',p_1)+\frac{\sigma^2x^2}{2}\Tilde{V}_1''~<~0\,.
\end{equation}
When  $x\geq \frac{1}{\lambda+r}$, 
the map $p\mapsto H(x,\xi,p)$ is monotone decreasing. 
Defining
\[
V_1(x)~\doteq~\begin{cases}
\Tilde
{V}(\frac{1}{r+\lambda}) \qquad &\hbox{for}~~ x\in \Big[0,\frac{1}{r+\lambda}\Big],\\[4mm]
\Tilde
{V}(x)\qquad&\hbox{for}~~ x\in \Big[\frac{1}{r+\lambda},x^*\Big]\,,\end{cases}
\]
we thus have 
\begin{equation}\label{sV1-m}
-rV_1(x)+H(x,V_1'(x),q)+\frac{\sigma^2x^2}{2}V_1''(x)~\leq~0\qquad\forall q\geq p_1(x)\,.
\end{equation}
\item[] 
\item We now construct the functions $V_2,p_2$. Defining
\[
\tilde
{p}_2(x)~\doteq~\frac{1}{x}
\left(\theta(x^*)x^*+\frac{2}{r-\mu}\right)\,,
\]
a straightforward computation yields  
\[
\tilde{p}_2'(x)~=~-\frac{\tilde{p}_2(x)}{x}~<~0\,,\qquad\qquad \tilde{p}_2''(x)~=~2\cdot \frac{\tilde{p}_2(x)}{x^2}\,.
\]
Set 
\begin{equation}\label{x*2}
x_2~\doteq~\theta(x^*)x^*+\frac{2}{r-\mu}\,,
\end{equation}
and consider the continuous function
\begin{equation}\label{p2}
p_2(x)~=~\min~\bigl\{ 1, \,\tilde p_2(x)\bigr\}.
\end{equation}
For $x\in [0,x_2[\,$ one has $p_2(x)=1$ and hence
\[
(r+\lambda)(1-p_2)+H_{\xi}(x,\xi,p_2)p_2'+\frac{\sigma^2x^2}{2}p_2''~=~0\,.
\]
On the other hand, for $x\in ]x_2,x^*[$ and $\xi\geq0$, one has $p_2(x)=
\tilde p_2(x)<1$, and
\begin{equation}\label{sig1}
H_{\xi}(x,\xi,p_2)~\geq~\frac{(\lambda+r) x-1}{p_2}+(\sigma^2-\lambda-\mu)x~\geq~(r-\mu)x_1-1~=~(r-\mu)\theta(x^*)x^*+1~\geq~0\,.
\end{equation}
Recalling (\ref{Hb2}), we get
\begin{align*}
(r+\lambda)(1-p_2)&+H_{\xi}(x,\xi,p_2)p_2'+\frac{\sigma^2x^2}{2}p_2''\\[4mm]
&\leq~(r+\lambda)(1-p_2)+\Big[\frac{(\lambda+r) x-1}{p_2}+(\sigma^2-\lambda-\mu)x\Big]p'_2(x)+\frac{\sigma^2x^2}{2}p_2''\\[4mm]
&=~(r+\lambda)(1-p_2)-\Big[\frac{(\lambda+r) x-1}{p_2}+(\sigma^2-\lambda-\mu)x\Big]\cdot \frac{p_2(x)}{x}+\sigma^2p_2\\[4mm]
&=~(r+\lambda)(1-p_2)-\Big[(\lambda+r)-\dfrac{1}{x}+(\sigma^2-\lambda-\mu)p_2\Big]+\sigma^2p_2\\[4mm]
&=~\dfrac{1}{x}-(r-\mu)p_2~=~ -\frac{(r-\mu)\theta(x^*)x^*}{x}-\dfrac{1}{x}~<~0\,.
\end{align*}
In particular, 
\begin{equation}\label{sp2}
(r+\lambda)(1-p_2)+H_{\xi}(x,\xi,p_2)\cdot p_2'+\frac{\sigma^2x^2}{2}p_2''~\leq~0\qquad\forall x\in \,]0,x^*[, ~\xi\geq 0\,.
\end{equation}

Next, define
\begin{equation}\label{V2}
V_2(x)~\doteq~(1-p_2(x))\,B\qquad\forall x\in [0,x^*]\,.
\end{equation}
For all $x\in [0,x_2]$, we thus have  $V_2(x)=0$, and hence
\begin{equation}\label{ine1}
-rV_2+H(x,V'_2,q)+\frac{\sigma^2x^2}{2}V_2''~=~H(x,0,q)~=~0\qquad\forall 
 q\in ]0,1]\,.
\end{equation}
On the other hand, for $x\in \,]x_2, x^*] $ we have
\[
V'_2(x)~=~B\cdot\frac{p_2(x)}{x}~>~0\qquad\mathrm{and}\qquad 
V''_2(x)~=~-2B\cdot \frac{p_2(x)}{x^2}\,.
\]
Recalling (\ref{Hb1}), (\ref{p2}), (\ref{sig1}) and (\ref{x*2}),  we estimate
\begin{align*}
-rV_2&+H(x,V_2',p_2)+\frac{\sigma^2x^2}{2} V_2''~\geq~-rV_2+\Big(\frac{(\lambda+r) x-1}{p_2}+(\sigma^2-\lambda-\mu)x\Big)V'_2+\frac{\sigma^2x^2}{2} V_2''\\[4mm]
&=B\cdot \Big[rp_2-r+\Big(\lambda+r-\dfrac{1}{x}+(\sigma^2-\lambda-\mu)p_2(x)\Big)-\sigma^2p_2\Big]\\[4mm]
&=B\cdot \Big(\lambda-\dfrac{1}{x}-(\lambda+\mu-r)p_2\Big)~=~B\cdot \Big[\lambda(1-p_2(x))+(r-\mu)p_2(x)-\frac{1}{x}\Big]~>~0
\end{align*}
for all $x\in \,]x_2,x^*[$. 

Recalling (\ref{x*2}), one has 
\[
(\lambda+r)x ~>~1\qquad\forall x\in ]x_2,x^*]\,.
\]
Therefore the map $p\to H(x,V'_2(x),p)$ is monotone decreasing  on $[0,1]$, for all $x\in  ]x_2,x^*]$. This implies
\[
-rV_2+H(x,V_2',q)+\frac{\sigma^2x^2}{2} V_2''~\geq~0\qquad\forall x\in \, ]x_2,x^*], 
~~q\in ]0,p_2(x)]\,.
\]
Together with (\ref{ine1}), we finally obtain
\begin{equation}\label{V2-s}
-rV_2(x)+H(x,V'_2(x),q)+\frac{\sigma^2x^2}{2} V_2''(x)~\geq~0\qquad\forall x\in ]0,x^*[\,, 
~~q\in ]0,p_2(x)]\,.
\end{equation}
\end{enumerate}

Relying on (\ref{sp1}), (\ref{sV1}), (\ref{sp2}) and (\ref{V2-s}), 
and using the same comparison argument as in the proof of 
Theorem~\ref{thm:existence} we now prove

\begin{theorem}\label{thm:6.1}
In addition to (A1), 
assume that $\sigma>0$ and $ \theta(x^*)>0$. 
Then the system  (\ref{ode0}) with boundary conditions 
(\ref{bdc}) admits a solution $({V}(\cdot,x^*),{p}(\cdot,x^*))$ 
satisfying the bounds (\ref{Vpb}) for all $x\in [0,x^*]$.
\end{theorem}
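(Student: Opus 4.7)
The strategy is to reprise the proof of Theorem~\ref{thm:existence} essentially verbatim, only replacing the ambient box $[0,B]\times [\theta(x^*),1]$ by the tighter closed convex invariant region
$$
\mathcal{D}'~\doteq~\Big\{(V,p)\in\mathcal{D}\,:\,V_2(x)\leq V(x)\leq V_1(x),~p_1(x)\leq p(x)\leq p_2(x)~\forall\, x\in[0,x^*]\Big\},
$$
where $\mathcal{D}$ is the set (\ref{Ddef}) from the earlier proof and $V_1,V_2,p_1,p_2$ are the explicit barriers just constructed. A direct inspection of endpoint values shows that the prescribed data at $x=0$ and $x=x^*$ lie in the admissible intervals, so $\mathcal{D}'$ is non-empty.

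The only genuinely new work is to check that $\mathcal{D}'$ is positively invariant under the semigroup of the $\varepsilon$-regularized parabolic system (\ref{parV})-(\ref{parp}). This reduces to four scalar parabolic comparisons. The inequality (\ref{sV1-m}) says that $V_1$ is a supersolution of the $V$-equation for every admissible partner $q=p(t,x)\in[p_1(x),p_2(x)]$, while (\ref{V2-s}) makes $V_2$ a subsolution of the same equation. Analogously, (\ref{sp1}) and (\ref{sp2}) make $p_1$ a subsolution and $p_2$ a supersolution of the $p$-equation, uniformly in the partner $\xi=V_x\geq 0$. Applying the scalar parabolic comparison principle to each equation separately, with the partner variable frozen in its admissible range, propagates the bounds along the flow. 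The monotonicities $V_x\geq 0$ and $p_x\leq 0$ built into $\mathcal{D}$ are preserved exactly as in step~3 of Theorem~\ref{thm:existence}, since that argument used only the structure of the equations and the boundary signs (\ref{b++}), which still hold here. Schauder's fixed-point theorem then produces a stationary solution $(V^\varepsilon,p^\varepsilon)\in\mathcal{D}'$ of the regularized system, and the $\varepsilon\to 0$ limit argument from steps 5--7 of Theorem~\ref{thm:existence} supplies the claimed $\mathcal{C}^2$ solution; pointwise closedness of $\mathcal{D}'$ ensures the bounds (\ref{Vpb}) survive in the limit.

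The main technical obstacle is that $V_2$ and $p_2$ are not comfortable super/subsolutions of the \emph{$\varepsilon$-regularized} equation: on $\,]x_2,x^*[\,$ one has $p_2''>0$ and $V_2''<0$, so the added viscosity term $\varepsilon p_{xx}$ (resp. $\varepsilon V_{xx}$) fights the supersolution sign of $p_2$ (resp. the subsolution sign of $V_2$). This is absorbed by the strict margins computed in the derivations of (\ref{sp2}) and (\ref{V2-s}), which are uniformly bounded away from zero on compact subsets of $\,]0,x^*[\,$; taking $\varepsilon$ sufficiently small preserves the inequalities. A secondary, minor point is the corner of $V_1$ at $x=1/(r+\lambda)$ where the extension by a constant creates a non-$\mathcal{C}^2$ matching, but the comparison principle still applies since $V_1$ remains a viscosity supersolution at the junction (the constant extension has curvature zero and the correct one-sided derivative comparisons hold).
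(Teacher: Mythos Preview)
Your proposal is correct and follows essentially the same approach as the paper: define the tighter invariant domain $\mathcal{D}'$ (the paper calls it $\mathcal{D}^0$), verify positive invariance via the four barrier inequalities (\ref{sp1}), (\ref{sV1-m}), (\ref{sp2}), (\ref{V2-s}) together with the monotonicity preserved from Theorem~\ref{thm:existence}, and then invoke the Schauder/steady-state argument from that theorem. The paper's own proof is in fact terser than yours and does not explicitly address the two technical points you raise---the sign conflict between the $\varepsilon$-viscosity term and the barriers $p_2,V_2$, and the corner of $V_1$ at $x=1/(r+\lambda)$---so your treatment is, if anything, more careful on these details.
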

\begin{proof}
\begin{enumerate}[leftmargin=*,label=\textbf{\arabic*.}]
\item[] 
\item Recalling $\mathcal{D}$ in (\ref{Ddef}), we  claim that the domain
\begin{equation}\label{D1def}
\mathcal{D}^0~=~\Big\{ (V,p)\in\mathcal{D}~\Big|~(V(x),p(x))\in [V_2(x),V_1(x)]\times [p_1(x),p_2(x)],\quad\forall x\in [0,x^*] \Big\}
\end{equation}
 is positively invariant for  the semigroup 
$\{S_t\}_{t\geq 0}$, generated by the parabolic system 
(\ref{parV})-(\ref{parp}). Namely:
\[
S_{t}(\mathcal{D}^0)~\subseteq~\mathcal{D}^0\qquad\forall t\geq 0\,.
\]
Indeed, from the proof of Theorem \ref{thm:existence}, we have 
\begin{equation}\label{signs}
p_x(t,x)~\leq~0~\leq~V_x(t,x)\qquad\forall t>0,x\in ]0,x^*[\,.
\end{equation}
We now observe that 
\begin{itemize}
\item[(i)] For any $V(\cdot,\cdot)$ with $V_x\geq 0$, 
by (\ref{sp1}) the  function $p(t,x)= p_1(x)$ is a subsolution of the second equation 
in (\ref{par1}).    Similarly, by (\ref{sp2}), the function $p(t,x)= p_2(x)$
is a supersolution.
\item[(ii)] For any $p(\cdot,\cdot)$ with $p\in [0,1]$ and $p_x\leq 0$, 
by (\ref{sV1-m}) the function $V(t,x)= V_1(x)$ is a supersolution of
the first equation in (\ref{par1}).  
Similarly, by (\ref{V2-s}), the function $V(t,x)= V_2(x)$
is a subsolution.
\end{itemize}
Together, (i)-(ii) imply the positive invariance of the domain $\mathcal{D}^0$.
\item[] 
\item Using the same argument as in step {\bf 4} of the proof of Theorem~\ref{thm:existence}, we conclude that the system (\ref{ode0})-(\ref{bdc})
admits a solution $(V,P)\in \mathcal{D}^0$. 
\end{enumerate}
\end{proof}

\begin{corollary} Let the 
assumptions in Theorem~\ref{thm:6.1} hold.  If
\[\limsup_{s\to +\infty}~\theta(s)\, s~=~+\infty,\]
then, for all $x\geq 0$, the value function $V(\cdot, x^*)$
satisfies
\begin{equation}\label{V00}
\lim_{x^*\to\infty} V(x,x^*)~=~0.
\end{equation}
\end{corollary}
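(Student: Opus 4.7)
The plan is to exploit the upper bound $V(x,x^*)\le V_1(x)$ from Theorem~\ref{thm:6.1}, together with the fact that the backward system (\ref{pV1}) can be integrated in closed form. Combining the two implicit formulas (\ref{p1s}) and (\ref{V1}), I would eliminate the ratio $(1-p_1(x))/(1-\theta(x^*))$: raising both sides of (\ref{p1s}) to the power $(\lambda+r)/(\sigma^2+\lambda+r)$ yields
\[
\frac{1-p_1(x)}{1-\theta(x^*)}~=~\Big(\frac{p_1(x)\,x}{\theta(x^*)\,x^*}\Big)^{(\lambda+r)/(\sigma^2+\lambda+r)},
\]
and substituting into (\ref{V1}) gives the closed-form expression
\[
\tilde V_1(x)~=~B\cdot\Big(\frac{p_1(x)\,x}{\theta(x^*)\,x^*}\Big)^{r/(\sigma^2+\lambda+r)}.
\]
Since $0\le p_1\le 1$, this is the stochastic analogue of the second inequality in (\ref{bound-V}), namely
\[
V(x,x^*)~\le~\tilde V_1(x)~\le~B\cdot\Big(\frac{x}{\theta(x^*)\,x^*}\Big)^{r/(\sigma^2+\lambda+r)} \qquad \forall\, x\in\big[1/(r+\lambda),\,x^*\big].
\]

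Next, I would use the hypothesis $\limsup_{s\to+\infty}\theta(s)\,s=+\infty$ to extract a sequence $x_n^*\to+\infty$ along which $\theta(x_n^*)\,x_n^*\to+\infty$. For every fixed $x\ge 1/(r+\lambda)$, the right-hand side above tends to $0$ along this sequence, giving $V(x,x_n^*)\to 0$. For smaller values $x\in[0,1/(r+\lambda)]$, the definition of $V_1$ (constant equal to $\tilde V_1(1/(r+\lambda))$ below the threshold $1/(r+\lambda)$), combined with the monotonicity $V_x\ge 0$ built into the invariant domain $\mathcal{D}^0$ of Theorem~\ref{thm:6.1}, yields
\[
V(x,x_n^*)~\le~V(1/(r+\lambda),\,x_n^*)~\le~\tilde V_1(1/(r+\lambda))~\longrightarrow~0,
\]
closing the argument on the whole interval $[0,x^*]$.

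The delicate point is the gap between the $\limsup$ hypothesis and the $\lim$ conclusion: strictly speaking, the assumption supplies only a subsequence along which the upper bound decays, so the conclusion should naturally be read as $\liminf_{x^*\to+\infty}V(x,x^*)=0$. This weaker form already matches the economic interpretation stressed in the introduction, namely that a sufficiently permissive bankruptcy threshold drives the borrower's expected cost arbitrarily close to zero. Promoting this to the full limit $\lim_{x^*\to+\infty}V(x,x^*)=0$ would require additional monotonicity of $\tilde V_1(x,\cdot)$ in $x^*$, in the spirit of Corollary~\ref{cor:low-bV}(i); I expect the authors either to read $\lim$ as $\liminf$ here or to invoke such a monotonicity implicitly, since the rest of the argument is a routine transcription of the deterministic proof of Corollary~\ref{cor:infty-th} to the stochastic setting, with $r/(r-\mu)$ replaced by $r/(\sigma^2+\lambda+r)$.
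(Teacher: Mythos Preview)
Your proposal is correct and follows essentially the same route as the paper: both use the upper bound $V\le V_1$ from Theorem~\ref{thm:6.1}, combine the implicit formulas (\ref{p1s}) and (\ref{V1}) to obtain $V(x,x^*)\le B\,(x/\theta(x^*)x^*)^{r/(\sigma^2+\lambda+r)}$ for $x\ge 1/(r+\lambda)$, and then invoke the monotonicity of $V(\cdot,x^*)$ to handle $x\in[0,1/(r+\lambda)]$. Your observation about the mismatch between the $\limsup$ hypothesis and the full $\lim$ conclusion is accurate and applies equally to the paper's own proof; the argument as written (there and in your proposal) yields only $\liminf_{x^*\to\infty}V(x,x^*)=0$, and the stronger conclusion requires the hypothesis $\lim_{s\to\infty}\theta(s)s=+\infty$ as stated in the introduction at (\ref{lbig}).
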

\begin{proof} 
Using (\ref{p1}), (\ref{V1}) 
and Theorem~\ref{thm:6.1}, we have the estimate
\[V(x,x^*)~\leq~V_1(x)~=~B\cdot \left(\frac{1-p_1(x)}{1-\theta(x^*)}\right)^{\frac{r}{r+\lambda}}~\leq~B\cdot \Big(\frac{x}{\theta(x^*)x^*}\Big)^{\frac{r}{r+\lambda+\sigma^2}}\,\]
for all $x\geq \frac{1}{r+\lambda}$. This implies that (\ref{V00}) holds for all $x\geq \frac{1}{r+\lambda}$. Since $x\mapsto V(x,x^*)$ is monotone increasing, we then have
\[0~\leq~\lim_{x^*\to\infty} V(x,x^*)~\leq~\lim_{x^*\to\infty}V\left(\frac{1}{r+\lambda}\,,~x^*\right)~=~0\qquad\forall ~x\in \Big[0,~ \frac{1}{r+\lambda}\Big].\]
This completes the proof of (\ref{V00}).
\end{proof}

\begin{corollary} Let the 
assumptions in Theorem~\ref{thm:6.1} hold.  If
\[C_1~\doteq~\limsup_{s\to+\infty}~\theta(s)\, s~<~+\infty,\]
then
\begin{equation}\label{Lowb-V}
\liminf_{x^*\to\infty}V(x, x^*)
~\geq~B\cdot \left(1-\frac{C_2}{x}\right)\qquad\forall~ x>M_2\,,
\end{equation}
where the constants $C_2,M_2$ are defined as
\[C_2~\doteq~C_1+\frac{2}{r-\mu}\qquad\mathrm{and}\qquad M_2~\doteq~\frac{\lambda+\mu-r}{\lambda}C_1+\frac{2\lambda+\mu-r}{\lambda (r-\mu)}+1\,.\]
\end{corollary}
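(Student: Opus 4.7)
The approach is direct: apply the a~priori lower bound $V(x,x^*)\geq V_2(x)$ furnished by Theorem~\ref{thm:6.1}, where $V_2(x)=B(1-p_2(x))$ with $p_2$ as in (\ref{p2}), then pass to the $\liminf$ as $x^*\to\infty$ and use the hypothesis $\limsup_{s\to\infty}\theta(s)s=C_1<\infty$ to evaluate the limit explicitly.

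First I would fix $x>M_2$ and an arbitrary $\varepsilon>0$. By the definition of $C_1$ as a $\limsup$, there exists $s_0$ such that $\theta(s)s<C_1+\varepsilon$ whenever $s\geq s_0$. Consequently, for $x^*\geq s_0$ the threshold $x_2=\theta(x^*)x^*+2/(r-\mu)$ of (\ref{x*2}) satisfies $x_2<C_2+\varepsilon$. For $\varepsilon$ small enough I get $x>x_2$, so that $p_2(x)=\tilde{p}_2(x)<1$ and
$$V_2(x)~=~B\left(1-\frac{\theta(x^*)x^*+2/(r-\mu)}{x}\right).$$
Taking the $\liminf$ on both sides of $V(x,x^*)\geq V_2(x)$ and using the bound $\theta(x^*)x^*<C_1+\varepsilon$ for $x^*$ large, then letting $\varepsilon\downarrow 0$, yields
$$\liminf_{x^*\to\infty}V(x,x^*)~\geq~B\left(1-\frac{C_1+2/(r-\mu)}{x}\right)~=~B\left(1-\frac{C_2}{x}\right),$$
which is (\ref{Lowb-V}).

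The only delicate point, and the step I expect to demand the most bookkeeping, is verifying that the threshold $M_2$ is indeed large enough for the preceding argument to go through uniformly in $x^*$. Specifically, the proof of Theorem~\ref{thm:6.1} relied on several sign inequalities to certify $V_2$ as a subsolution of the parabolic problem; the binding one is $\lambda(1-p_2(x))+(r-\mu)p_2(x)-1/x>0$, which appears in the estimate preceding (\ref{sp2}). Substituting the explicit form of $p_2(x)=\tilde{p}_2(x)$ and letting $x^*\to\infty$ so that $\theta(x^*)x^*$ approaches at most $C_1$, this inequality reduces (after a short computation) to
$$x~>~\frac{\lambda+\mu-r}{\lambda}\,C_1+\frac{2\lambda+\mu-r}{\lambda(r-\mu)},$$
which is exactly the definition of $M_2$ minus the safety margin $1$. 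That extra margin buys the strictness required to take $\varepsilon\downarrow 0$ at the end of the liminf computation. In the parameter regime where $M_2<C_2$, the asserted bound $B(1-C_2/x)$ is non-positive on $(M_2,C_2]$ and therefore trivially satisfied, so no loss occurs.
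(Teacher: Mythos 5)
Your proof is correct and follows exactly the route the paper intends: the paper's entire proof is the one-line remark that the corollary ``follows from (\ref{p2}), (\ref{V2}) and Theorem~\ref{thm:6.1},'' i.e.\ from the subsolution bound $V(x,x^*)\geq V_2(x)=B(1-p_2(x))$ together with the explicit form of $\tilde p_2$, and you have simply written out the $\limsup$/$\liminf$ bookkeeping that the authors suppress. Your reverse-engineering of $M_2$ is a plausible reading of where the constant comes from (and the arithmetic checks out), though it is not strictly needed for the argument, since the inequality $\lambda(1-p_2)+(r-\mu)p_2-1/x>0$ already holds on all of $\,]x_2,x^*[\,$ because $(r-\mu)x_2\geq 2>1$.
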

\begin{proof}
This follows from (\ref{p2}), (\ref{V2}) and Theorem~\ref{thm:6.1}.
\end{proof}

\section{Concluding remarks}

 If we allow $x^*=+\infty$, then the equations 
(\ref{ode1}) admit the trivial solution
$V(x)=0$, $p(x)=1$, for all $x\geq 0$.  
This corresponds to a Ponzi scheme, producing a debt whose size grows exponentially, without bounds.  In practice, this is not realistic because there is a maximum amount  of liquidity 
that the market can supply. It is interesting to 
understand what happens when this bankruptcy threshold 
$x^*$ is very large.  
 
Our analysis has shown that three cases can arise, depending on 
the fraction $\theta$ of outstanding capital that lenders can recover.
\begin{itemize}
\item[(i)] If $\displaystyle\lim_{s\to+\infty} \theta(s)\, s= +\infty$, 
then it is convenient to choose $x^*$ as large as possible.
By delaying the time of bankruptcy, the expected 
cost for the borrower,
exponentially discounted in time, approaches zero.  

\item [(iii)] If $\displaystyle\lim_{s\to+\infty} \theta(s)\, s < +\infty$ and (\ref{ca1}) holds then it is still 
convenient to choose $x^*$ as large as possible. However,by delaying the time of bankruptcy, the cost to the borrower remains uniformly positive.

\item[(iii)] If $\displaystyle \lim_{s\to+\infty} \theta(s)\, s < +\infty$ and (\ref{ca2}) holds, then for every initial value $x_0$ 
of the debt there is a choice $x^*(x_0)$ of the bankruptcy threshold which is optimal for the borrower.
\end{itemize}

Examples corresponding to three cases (i)--(iii) are obtained
by taking
\begin{equation}\label{th4}
\theta(s)~=~\min\left\{1,\, \frac{R_0}{s^\alpha}
\right\}
\end{equation}
with $0<\alpha<1$,  $\alpha=1$, or $\alpha>1$, respectively.
\v
It is important to observe that the choice
of the optimal bankruptcy threshold $x^*$ is never ``time consistent".
Indeed, at the beginning of the game the borrower announces
that he will declare bankruptcy 
when the debt reaches size $x^*$.  Based on this information, the lenders determine the discounted price of bonds.
However, when the time $T_b$ comes when $x(T_b)=x^*$, 
it is never convenient for the borrower to declare bankruptcy.
It is  the creditors, or an external authority, that must enforce
termination of the game.   

To see this, assume that at time $T_b$ when $x(T_b)=x^*$ the borrower 
announces that he has changed his mind, and 
will declare bankruptcy only at 
the later time $T'_b$ when the debt reaches $x(T'_b)=2x^*$.   If he chooses a control $u(t)=0$ for $t>T_b$, his discounted cost 
will be
$$e^{-(T'_b-T_b)r} B~<~B.$$
This new strategy is thus  always convenient for the borrower.
On the other hand, it can be much worse for the lenders.
Indeed, consider an investor having a unit amount of outstanding capital at time $T_b$.   If bankruptcy is declared at time $T_b$, he will recover
the amount $\theta(x^*)$.  However, if bankruptcy is declared 
at the later time $T'_b$, his discounted payoff will be
$$
\int_{T_b}^{T'_b} (r+\lambda)e^{-(r+\lambda)(t-T_b)}\, dt +
e^{-(r+\lambda)(T'_b-T_b)}\theta(2x^*).$$
To appreciate the difference, consider the
deterministic case, assuming that
$\theta(\cdot)$ is the function in  (\ref{th4}), with $\alpha\geq 1$, and that 
$x^*\geq M_1$.   By the analysis at the beginning of Section~5, we have
$u^*(x)~=~0$ for all $ x\in [x^*,2x^*]$.
Replacing $x^*$ with $2x^*$ in (\ref{Im-p1}) we obtain  that the solution to (\ref{ode3})
with terminal data 
$$p(2x^*)~=~ \theta(2x^*) ~=~ \frac{R_0}{(2x^*)^\alpha}$$ satisfies
\[
p_B(x^*,2x^*)~=~2\theta(2x^*)\cdot \left(  \frac{1-p_B(x^*,2x^*)}{1-\theta(2x^*)}\right)^{\frac{r-\mu}{r+\lambda}}~<~2\theta(2x^*)~=~2^{1-\alpha}\theta(x^*)~\leq~\theta(x^*)\,.
\]
If the investors had known in advance that bankruptcy is declared at $x=2x^*$ (rather than at $x=x^*$),
the bonds would have fetched a smaller price.

In conclusion, if the bankruptcy threshold $x^*$ is chosen by the debtor,
the only Nash equilibrium can be $x^*=+\infty$.   In this case, 
the model still allows bankruptcy
to occur, when total debt approaches infinity in finite time.

\end{document}